\def\<{\langle}
\def\>{\rangle}
\def\p{\partial}
\def\be{\begin{equation}}
\def\ee{\end{equation}}
\def\ba{\begin{array}}
\def\ea{\end{array}}
\newtheorem{theorem}{Theorem}
\newtheorem{lemma}{Lemma}[section]
\newtheorem{dfn}{Definition}[section]
\newtheorem{rem}{Remark}
\numberwithin{equation}{section}
\def\be{\begin{equation}}
\def\ee{\end{equation}}
\def\br{\begin{eqnarray}}
\def\er{\end{eqnarray}}
\def\p{\partial}
\title{KAM theorem for reversible mapping of low smoothness with application \footnote{
E-mail:xlijing@sdu.edu.cn(Li); qjg816@163.com(Qi); xpyuan@fudan.edu.cn(Yuan) }}
\author{ \mbox{Jing \ Li$^{\dag}$ }  \mbox{Jiangang \ Qi$^{\dag}$ }    \mbox{Xiaoping \ Yuan$^\ddag$ \hspace{12pt}} \\
$^\dag$ School of Mathematics and Statistics, Shandong University, Weihai, China\\
$^\ddag$ School of Mathematical Sciences, Fudan University, Shanghai, China}
\begin{document}
 \maketitle
%

%
\begin{abstract} Assume the mapping
$$A:\left\{
      \begin{array}{ll}
        x_{1}=x+\omega+y+f(x,y),\\
        y_{1}=y+g(x,y),
      \end{array}
    \right.
(x, y)\in \mathbb{T}^{d}\times B(r_{0})
$$ is reversible with respect to $G: (x, y)\mapsto (-x, y),$ and $| f | _{C^{\ell}(\mathbb{T}^{d}\times B(r_{0}))}\leq \varepsilon_{0}, | g |_{C^{\ell+d}(\mathbb{T}^{d}\times B(r_{0}))}\leq \varepsilon_{0},$
where $B(r_{0}):=\{|y|\le r_0:\; y\in\mathbb R^d\},$ $\ell=2d+1+\mu$ with $0<\mu\ll 1.$ Then when $\varepsilon_{0}=\varepsilon_{0}(d)>0$ is small enough and $\omega$ is Diophantine, the map $A$ possesses an invariant
 torus with rotational frequency $\omega.$ As an application of the obtained theorem, the Lagrange stability is proved for a class of reversible Duffing equation with finite smooth perturbation.

\end{abstract}

\section{Introduction and Main Results}\label{s1}

Kolmogorov \cite{Kol}, Arnold \cite{Arnold1963} and Moser \cite{Moser1962} established the well-known KAM
theory after their names. Let us begin with a nearly integrable Hamiltonian $H=H_0(y)+\varepsilon\, R(x,y)$ where $x\in\mathbb T^d$ is angle variable and $y$ is
action variable in some compact set of $\mathbb R^d$. Endow $H$ the symplectic structure $d\, y\wedge d\, x$. Assume $H_0$ is non-degenerate in the Kolmogorov's sense: $\text{det}\,\left( \frac{\p^2}{\p y^2}\, H_0(y) \right)\neq 0$. In 1954's ICM, Kolmogorov announced that for any Diophantine vector $\omega:= \frac{\p}{\p y}\, H_0(y)$, the Hamiltonian $H$ possesses an invariant torus which carries quasi-periodic motion with rotational frequency vector $\omega$ provided that $H$ is analytic and $\varepsilon$ is small enough. This result is called Kolmogorov's invariant-tori-theorem.  Kolmogorov  himself gave an outline of proof in \cite{Kol}. Arnold \cite{Arnold1963} gave a detail proof for the Kolmogorov's theorem. Arnold's proof is a little bit different from Kolmogorov's outline. Recently one has found that Kolmogorov's proof is valid and of more merits.  Kolmogorov's basic idea is to overcome the difficulty arising from resonances (small divisors) by Newton iteration method.
The main contribution  of Moser \cite{Moser1962} to the KAM theory was to extend
Kolmogorov's invariant-tori-theorem  to smooth category.  Moser exploited smooth approximation technique
closely related to idea of Nash \cite{Nash1956}  to overcome the loss of regularity due to the inversion of
certain (non-elliptic) differential operators at each Newton iteration step.
In the original work of Moser \cite{Moser1962}, which
deals with twist area-preserving maps (corresponding to the Hamiltonian system
case in ¡°one and a half¡± degrees of freedom), the perturbation was assumed to be
 $C^{333}$. The smoothness assumption (in the twist map case) was later relaxed to five
by R\"ussmann \cite{Russmann1970}. The Moser's theorem with improvement by R\"ussmann is usually called Moser's twist theorem.
For the Hamiltonian case we refer to \cite{Moser1969, Zehnder1976},
and, especially, \cite{Poschel1982},
where Kolmogorov¡¯s theorem is proved under the hypothesis that the perturbation is $C^{\ell}$ with $\ell> 2d$.

It is well-known that a center ( phase space is foliated by $1$-dimensional invariant tori) of planar linear system can changed into a focus by a non-Hamiltonian nonlinear perturbation so that all invariant tori are broken down. From this one sees that the Hamiltonian structure plays an important role in preserving the invariant tori undergoing perturbations. Besides the Hamiltonian structure (or symplectic structure for mappings), there is so-called reversible structure for differential equations or mappings on which KAM theory can be constructed. Moser \cite{Moser1973} and Arnold \cite{Arnold1984} initiated the study of reversible differential equations or reversible mappings. In 1973, Moser \cite{Moser1973} constructed a KAM theorem for
\[
  \dot x=\omega+y+f(x,y),\quad \dot y =g(x,y),\]
 where $f$ and $g$ are analytic in their arguments and reversible with respect to the involution $(x,y)\mapsto (-x,y)$, that is,
\[f(-x,y)=f(x,y),\quad g(-x,y)=-g(x,y).\]
The KAM theory for analytic reversible equations (vector-fields) of more general form was deeply investigated in Sevryuk \cite{Sevryuk1986, Sevryuk95, Sevryuk98, Sevryuk2011, Sevryuk2012} and Broer \cite{Broer2009}. Zhang \cite{Zhang2008} constructed a KAM theorem for a class of reversible equations which are assume to be $C^\ell$ smooth where the low bound of $\ell<\infty$ is not specified. Sevryuk \cite{Sevryuk1986} also studied deeply the KAM theory for reversible mappings. For example,
Sevryuk \cite{Sevryuk1986}
constructed a KAM theorem for a reversible mapping $A$ with respect to $G:$
$$A: \left(
       \begin{array}{c}
         x \\
         y \\
         z \\
       \end{array}
     \right)\mapsto\left(
                     \begin{array}{c}
                       x+\lambda y+f^{1}(x, y, z) \\
                       y+f^{2} (x, y, z)\\
                       z+f^{3}(x, y, z) \\
                     \end{array}
                   \right),\;G: \left(
                                  \begin{array}{c}
                                    x \\
                                    y \\
                                    z \\
                                  \end{array}
                                \right)\mapsto \left(
                                                 \begin{array}{c}
                                                   -x+\alpha^{1}(x, y, z) \\
                                                   y+\alpha^{2}(x, y, z) \\
                                                   z+\alpha^{3}(x, y, z) \\
                                                 \end{array}
                                               \right),
$$
where $(x, y, z)$ is in some domain in $\mathbb{T}^{n}\times \mathbb{R}^{p}\times\mathbb{R}^{q},$ constant $\lambda\in (0, 1]$,
 $f^{j}$ $(j=1, 2, 3)$ and $\alpha^{j} (j=1, 2, 3)$ are real analytic in some domain. Liu \cite{Liu2005} established a KAM theorem for analytic and reversible mapping which is quasi-periodic in $x$. In those works the mappings are required to be analytic.
 Naturally one hopes to construct KAM theory for reversible mapping of finite smoothness. Especially one can ask what the lowest smoothness assumption is for reversible mapping.

 Actually, KAM theorem for reversible mapping of finite smoothness is useful in the study of some ordinary differential equations. Dieckerhoff-Zehnder \cite{Dieckerhoff-Zehnder1987} showed the  Lagrange stability for Duffing equation
 \[\ddot x+x^{2n+1}+\sum_{j=0}^{2n}a_{j}(t)x^{j}=0,\;a_{j}(t)\in C^{\infty}(\mathbb{T}^{1})\]
 using Moser's twist theorem. See  \cite{Laederich-Levi1991, Liu1989, Liu1992, Yuan1995,Yuan1998,Yuan2000} for more details. Levi \cite{Levi1991} generalizes the
polynomial $ x^{2n+1}+\sum_{j=0}^{2n}a_{j}(t)x^{j}$ to any finite smooth function of $g(x,t)$ with some suitable conditions, by using the facts that the mapping is required to be finite smooth rather than analytic in Moser's twist theorem and that the Duffing equation is a Hamiltonian system.
Liu \cite{Liu1991, Liu1998, Liu2005}, Piao \cite{Piao2008} and  Yuan-Yuan \cite{Rong2001} proved the Lagrange stability for the  Duffing equation
\begin{equation}\label{1.1}
\ddot{x}+\left(\sum_{j=0}^{[(n-1)/2]}{b_{j}}(t)x^{2j+1}\right)\dot{x}+x^{2n+1}+\sum_{j=0}^{n}a_{j}(t)x^{2j+1}=p(t),
\end{equation} which is reversible with respect to $G: (x, \dot{x}, t) \mapsto (-x, \dot{x}, -t),$ where either $p(t)=0$ or
$p(t)$ is odd. If we want to generalize \eqref{1.1} to a general reversible system $\ddot{x}+g(x,\dot x, t)=0$ where $g$ is finite smooth in each variable, then we need to construct a KAM theorem for reversible mapping of finite smoothness. This is one of aims that we write the present paper.

To that end, let $\mathbb{T}^{d}=(\mathbb{R}/2 \pi \mathbb{Z})^{d}, B(r)=\{y\in \mathbb{R}^{d}\mid |y|<r\}$ with $r>0,$ and let us consider a twist mapping
$$A_{0}:\;\;\left\{
              \begin{array}{ll}
                x_{1}=x+ \omega+ y, \\
                y_{1}=y,
              \end{array}
            \right.$$
where $(x, y)\in \mathbb{T}^{d}\times B(r_{0})$ with some $r_{0}>0$ is
a constant, as well as $\omega\in \mathbb{R}^{d}$ is called frequency of $A_{0}.$ It is clear
that $A_{0}$ possesses an invariant torus
$$\mathcal{J}_{0}:=\{x_{1}=x+\omega: x\in \mathbb{T}^{d}\} \times \{y_{1}=0\}.$$
We will prove that the invariant torus $\mathcal{J}_{0}$ is preserved undergoing a  small perturbation of finite smoothness, provided that $\omega$ is Diophantine.
More exactly, we have the following theorem:

\begin{theorem}\label{thm1}
Consider a mapping $A$ which is the perturbation of $A_{0}:$
$$A:\left\{
      \begin{array}{ll}
        x_{1}=x+\omega+y+f(x,y),\\
        y_{1}=y+g(x,y),
      \end{array}
    \right.
(x, y)\in \mathbb{T}^{d}\times B(r_{0}).
$$
Suppose that
\begin{itemize}
  \item[(A1)] $\omega\in DC(\kappa, \tau)$ with $0<\kappa <1,$ $\tau> d,$ that is,
  there exist constants $1>\kappa>0$ and $\tau>d$ such that
\begin{equation}\label{eq1}
\mid \langle k, \omega\rangle+ j\mid\geq \frac{\kappa}{|k|^{\tau}},\;\;\forall \;(k, j)\in \mathbb{Z}^{d}\times \mathbb{Z},\;\;k\neq 0.
\end{equation} (In order to the smoothness of perturbations $f$ and $g$ is sharp, we take $\tau=d+\frac{\mu}{100}$ with $0<\mu\ll1$.)
  \item[(A2)] Given $\ell=2d+1+\mu$ with $0<\mu\ll 1,$ and $f, g: \mathbb{T}^{d}\times B(r_{0})\rightarrow \mathbb{R}^{d}$ are $C^{\ell}$ and $C^{\ell+d},$ respectively, and
              $$ |f|_{C^{\ell}(\mathbb{T}^{d}\times B(r_{0}))}\leq \varepsilon,\;\;|g|_{C^{\ell+d}(\mathbb{T}^{d}\times B(r_{0}))}\leq \varepsilon.$$
  \item[(A3)] The mapping $A$ is reversible with respect to the involution  $G: (x, y)\mapsto (-x, y),$ that is,
$$AGA=G\;\;\mbox{on} \;\;\mathbb{T}^{d}\times B(r_{0}).$$
\end{itemize}
Then there exists $\varepsilon_{0}=\varepsilon_{0}(\tau, d, r_{0})>0$ such that for any $0<\varepsilon<\varepsilon_{0},$ the mapping $A$
has an invariant torus $\Gamma$
and the restriction of $A$ on $\Gamma$ is expressed by
$$A\mid _{\Gamma}: x\mapsto x+\omega.$$
\end{theorem}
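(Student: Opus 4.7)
The plan is to follow the Moser--P\"oschel strategy for finite-smoothness KAM: first approximate the $C^{\ell}$ data by real analytic functions on a nested sequence of complex strips, then run a Newton-type iteration in the analytic category preserving reversibility at every step, and finally pass to the smooth limit.

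\textbf{Step 1 (Analytic smoothing).} Using a Jackson--Moser--Zehnder smoothing operator with an even kernel in $x$, construct analytic approximations $f^{(n)}$, $g^{(n)}$ of $f$, $g$ on the complex strip $D(s_n,r_n)=\{|\mathrm{Im}\,x|<s_n\}\times\{|y|<r_n\}$, with $s_n=s_0 2^{-n}\downarrow 0$. The approximations satisfy $|f-f^{(n)}|_{D(s_n,r_n)}\lesssim s_n^{\ell}\,\varepsilon$ and $|g-g^{(n)}|_{D(s_n,r_n)}\lesssim s_n^{\ell+d}\,\varepsilon$, while the analytic norms stay of order $\varepsilon$. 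The evenness of the kernel preserves the involution $x\mapsto -x$, so each truncated mapping $A^{(n)}=A_0+(f^{(n)},g^{(n)})$ remains reversible with respect to $G$.

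\textbf{Step 2 (One analytic KAM step).} Given a reversible analytic $A_n=A_0+(F_n,G_n)$ with $\max(|F_n|,|G_n|)_{D(s_n,r_n)}\le \varepsilon_n$, seek a reversible near-identity conjugation $\Phi_n=I+(u_n,v_n)$ with $u_n$ odd and $v_n$ even in $x$, so that $\Phi_n^{-1}A_n\Phi_n=A_0+(F_{n+1},G_{n+1})$ on a smaller domain $D(s_{n+1},r_{n+1})$. Linearising at $y=0$ yields the homological system
\begin{align*}
u_n(x+\omega,0)-u_n(x,0)&=-F_n(x,0)+c_n,\\
v_n(x+\omega,0)-v_n(x,0)&=-G_n(x,0),
\end{align*}
where the zero Fourier mode of $G_n(\cdot,0)$ is forced to vanish by reversibility of $A_n$, and $c_n$ is a constant frequency counter-term absorbed by a translation in $y$. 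Assumption~(A1) and the Diophantine estimate \eqref{eq1} invert the operator coefficient by coefficient, giving $|u_n|,|v_n|_{D(s_{n+1})}\lesssim \varepsilon_n (s_n-s_{n+1})^{-\tau}$ through a standard R\"ussmann-type Fourier bound. Taylor expansion in $y$ together with Cauchy estimates produces the quadratic bound $\max(|F_{n+1}|,|G_{n+1}|)\le C\varepsilon_n^{2}(s_n-s_{n+1})^{-c}$ for an explicit exponent $c=c(d,\tau)$; reversibility of $\Phi_n$ (guaranteed by the parity of $u_n,v_n$) propagates the reversibility of $A_{n+1}$.

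\textbf{Step 3 (Coupling with smoothing and derivative count).} Defining the genuine step error by adding the KAM output of Step 2 to the fresh analytic increment $f^{(n+1)}-f^{(n)}$, $g^{(n+1)}-g^{(n)}$, one obtains a recursion of the form $\varepsilon_{n+1}\le C\varepsilon_n^{2}s_n^{-c}+Cs_n^{\ell}\varepsilon$. With $s_n=s_0 2^{-n}$, super-exponential decay $\varepsilon_n\to 0$ follows once $\ell$ exceeds a threshold determined by $c$. A careful bookkeeping -- two small-divisor inversions per step (loss $\tau$ each), a Cauchy-estimate loss, and the $d$-fold loss when differentiating the solution of the $v$-equation and reinserting it into the $F$-update -- yields the threshold $\ell>2d+1$, exactly matching $\ell=2d+1+\mu$. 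The asymmetric requirement $C^{\ell+d}$ on $g$ rather than $C^{\ell}$ comes precisely from the extra $d$ derivatives consumed by this feedback of $v$ into the $x$-component of the conjugation.

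\textbf{Step 4 (Limit and invariant torus).} The composition $\Psi_n=\Phi_1\circ\cdots\circ\Phi_n$ converges uniformly on $\mathbb{T}^d\times\{0\}$ to a continuous embedding $\Psi:\mathbb{T}^d\to\mathbb{T}^d\times B(r_0)$, reversible in the sense $\Psi\circ(-\mathrm{id})=G\circ\Psi$, and satisfying $A\circ\Psi(x)=\Psi(x+\omega)$. Its image $\Gamma:=\Psi(\mathbb{T}^d)$ is the desired invariant torus on which $A$ restricts to the rigid rotation $x\mapsto x+\omega$.

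\textbf{Main obstacle.} The delicate point is the sharp derivative count of Step~3: one must simultaneously keep the analytic norms of $f^{(n)},g^{(n)}$ uniformly bounded, absorb the small-divisor losses of two homological equations per iteration, and tune $s_n$ so that the smoothing error $s_n^{\ell}$ does not outpace the quadratic KAM decay. Reaching the threshold $\ell=2d+1+\mu$, rather than the much larger smoothness demanded in Moser's original scheme, depends crucially on reversibility killing the zero-mean obstruction in the $v$-equation so that no parameter excision is needed, and on verifying that the smoothing operator preserves the involution so that the reversible structure survives every analytic KAM step.
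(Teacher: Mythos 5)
Your Step~1 contains the precise obstruction the paper is built to circumvent, and the claim there is wrong. For a \emph{mapping} $A$, reversibility is $AGA=G$, which unpacked reads
\begin{align*}
-f(x,y)+g(x,y)+f\bigl(-x-\omega-y-f(x,y),\,y+g(x,y)\bigr)&=0,\\
g(x,y)+g\bigl(-x-\omega-y-f(x,y),\,y+g(x,y)\bigr)&=0.
\end{align*}
This is a \emph{nonlinear} compatibility constraint coupling $f$ and $g$ through the map itself; it is not a parity condition on the components. Consequently, replacing $f,g$ by the Jackson--Moser--Zehnder smoothings $f^{(n)},g^{(n)}$ does \emph{not} yield a $G$-reversible map $A^{(n)}=A_0+(f^{(n)},g^{(n)})$, no matter how the kernel is chosen: an even kernel preserves parity of each component, but $AGA=G$ is not a parity statement. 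The authors point this out explicitly in Remark~1.3 ("when $(f,g)$ is reversible with respect to the involution $G$, we do not know if $(S_s f,S_s g)$ is, too, reversible with respect to $G$"), and it is the reason they do not run Moser's smoothing--Newton scheme directly on the map. Once $A^{(n)}$ loses reversibility, the zero-mode cancellation you invoke in Step~2 for the $v$-equation is no longer automatic, so the whole iteration breaks.

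The paper's actual route is a reduction: it first proves Theorem~1.2 for a reversible, time-periodic \emph{vector field} $\dot x=\omega+y+f(x,y,t)$, $\dot y=g(x,y,t)$ reversible with respect to $G:(x,y,t)\mapsto(-x,y,-t)$. For vector fields, reversibility \emph{is} the parity condition $f(-x,y,-t)=f(x,y,t)$, $g(-x,y,-t)=-g(x,y,t)$, and Lemma~3.2 shows a suitably symmetric kernel makes $S_s$ commute with $G$, so the analytic approximants stay reversible. The homological equations are then the PDEs $\omega\cdot\partial_x u+\partial_t u-v+f=0$ and $\omega\cdot\partial_x v+\partial_t v+g=0$ (not the difference equations you wrote), with reversibility forcing $\widehat g(0,0,y)=0$ exactly as you anticipate. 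Theorem~1.1 is then deduced from Theorem~1.2 by regarding $A$ as the Poincar\'e (time-$1$) map of a reversible flow, via Proposition~4.5 of Sevryuk. To repair your proposal you would need to either supply this map-to-flow reduction, or exhibit a smoothing operator that genuinely preserves the nonlinear constraint $AGA=G$ — which is not known.

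Two smaller points. Your loss budget is also off in Step~2: inverting each homological equation against a Diophantine frequency with exponent $\tau$ costs $\tau>d$ derivatives per equation, not $d$; the paper takes $\tau=d+\mu/100$ to make this marginal, and the $u$-equation feeds back the solution $v$ of the $g$-equation (hence the composite loss and the asymmetry $g\in C^{\ell+d}$). And the geometric schedule $s_n=s_0 2^{-n}$ with recursion $\varepsilon_{n+1}\lesssim \varepsilon_n^2 s_n^{-c}+s_n^{\ell}\varepsilon$ does not by itself close near the threshold $\ell=2d+1+\mu$; the paper uses $\varepsilon_\nu=\varepsilon^{(1+\tilde\mu)^\nu}$ and $s_\nu=\varepsilon_\nu^{1/\ell}$, a fast-decay schedule tuned so that the smoothing error and the quadratic KAM error balance at the claimed regularity.
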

\begin{theorem}\label{1.2}
Consider a system of non-autonomous differential equations
\begin{equation} \label{a}
(a):\;\;\left\{
  \begin{array}{ll}
    \dot{x}=\omega+y+f(x, y, t), \\
    \dot{y}=g(x, y, t),
  \end{array}
\right.\;\;(x, y, t)\in \mathbb{T}^{d}\times B(r_{0})\times \mathbb{T}:= D.
\end{equation}
Suppose that
\begin{itemize}
  \item [(a1)] $\omega\in DC(\kappa, \tau)$ with $0<\kappa <1,$ $\tau>d.$
  \item [(a2)] $f$, $g: \mathbb{T}^{d}\times B(r_{0})\times \mathbb{T}\rightarrow \mathbb{R}^{d}$ are $C^{\ell}$ and $C^{\ell+d},$ respectively, and
               $$|f|_{C^{\ell}(D)}\leq \varepsilon,\;\;|g|_{C^{\ell+d}(D)}\leq \varepsilon.$$
  \item [(a3)] The system \eqref{a} of differential equations is reversible with the involution $G: (x, y, t)\mapsto (-x, y, -t)$, that is,
               for any $(x, y, t)\in D,$
               \begin{eqnarray*}
               &f(-x, y, -t)=f(x, y, t),\\
               &g(-x, y, -t)=-g(x, y, t).
               \end{eqnarray*}
\end{itemize}
Then there exists $\varepsilon_{0}=\varepsilon_{0}(\tau, d, r_{0})>0$ such that for any $0<\varepsilon <\varepsilon_{0}$ there exists a coordinate
changes
\begin{eqnarray*}
\psi: \left\{
        \begin{array}{ll}
          x=\xi+u(\xi, \eta, t) \\
          y=\eta+v(\xi, \eta, t)
        \end{array}
      \right.
\end{eqnarray*}
such that the map $\psi$ restricted to $\{\xi=\omega t: t\in \mathbb{R}\}\times \{\eta=0\}\times \{t: t\in \mathbb{T}\}$ is a real $C^{0}$ embedding
into $\mathbb{T}^{d}\times \mathbb{R}^{d}$ of a rotational torus with frequency $\omega$ for the
system \eqref{a}.
\end{theorem}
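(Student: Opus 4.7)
\bigskip

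\noindent\textbf{Proof proposal for Theorem \ref{1.2}.} The plan is to reduce the non-autonomous system \eqref{a} to a discrete reversible map by taking a Poincar\'e section, verify that this map satisfies the hypotheses of Theorem \ref{thm1}, and then lift the invariant torus of the map back to an invariant torus of the flow.

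First I would view $t$ as a dynamical variable satisfying $\dot t = 1$, which makes the system autonomous on the extended phase space $\mathbb{T}^{d}\times B(r_{0})\times \mathbb{T}$. Let $(X(s;x_{0},y_{0}),Y(s;x_{0},y_{0}))$ denote the flow starting from $(x_{0},y_{0})$ at $t=0$. Define the Poincar\'e map $P(x,y)=(X(2\pi;x,y),Y(2\pi;x,y))$ on $\mathbb{T}^{d}\times B(r_{0})$. Integrating (a) and using $Y(s)=y+\int_{0}^{s}g\,d\tau$, one obtains
$$
X(2\pi)-x = 2\pi\omega + 2\pi y + \int_{0}^{2\pi}\!\!\int_{0}^{s} g(X,Y,\tau)\,d\tau\,ds + \int_{0}^{2\pi} f(X,Y,s)\,ds,
$$
$$
Y(2\pi)-y = \int_{0}^{2\pi} g(X,Y,s)\,ds.
$$
After the linear rescaling $\tilde\omega=2\pi\omega$, $\tilde y = 2\pi y$ (Diophantineness is preserved up to a constant), $P$ takes the form $x_{1}=x+\tilde\omega+\tilde y+\tilde f(x,\tilde y)$, $\tilde y_{1}=\tilde y+\tilde g(x,\tilde y)$ with $\tilde f,\tilde g=O(\varepsilon)$.

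The central algebraic step is to verify that $P$ is reversible with respect to $G_{0}\colon(x,y)\mapsto(-x,y)$, i.e.\ $P G_{0} P = G_{0}$. Given a solution $(X(s),Y(s))$ of (a) with initial value $(x_{0},y_{0})$ at $s=0$, I would define $\tilde X(s) = -X(2\pi-s)$, $\tilde Y(s) = Y(2\pi-s)$ and check directly that $(\tilde X,\tilde Y)$ solves (a) with initial value $(-X(2\pi),Y(2\pi))$ at $s=0$. This uses (i) the chain rule in $s\mapsto 2\pi-s$, (ii) the $2\pi$-periodicity of $f,g$ in $t$ so that $f(\cdot,\cdot,2\pi-s)=f(\cdot,\cdot,-s)$ and likewise for $g$, and (iii) the reversibility relations (a3). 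Uniqueness of the solution and evaluation at $s=2\pi$ then yield $P(-X(2\pi),Y(2\pi)) = (-x_{0},y_{0})$, which is precisely $PG_{0}P=G_{0}$.

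The smoothness and smallness requirements of Theorem \ref{thm1} for $\tilde f,\tilde g$ follow from standard ODE regularity applied to the composition $(f,g)\circ(X,Y,\cdot)$, noting that the gap $C^{\ell+d}$ versus $C^{\ell}$ in hypothesis (a2) is exactly what is needed so that $\tilde g$, which is an integral involving $g$ and the flow, inherits the extra $d$ derivatives required by Theorem \ref{thm1}; this is the technically delicate point and the main obstacle I anticipate, since the flow's regularity in $(x,y)$ is limited by the weaker of the two components and a careful use of the structure $\dot y=g$ (and a H\"older composition estimate in the spirit of the smoothing lemmas used to prove Theorem \ref{thm1}) is likely required to separate the regularities of $\tilde f$ and $\tilde g$. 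Once this is done, Theorem \ref{thm1} produces an invariant torus $\Gamma$ of $P$ on which $P$ conjugates to $x\mapsto x+\tilde\omega$; transporting $\Gamma\times\{t=0\}$ by the flow $\Phi^{t}$ gives a $(d+1)$-dimensional invariant torus in $\mathbb{T}^{d}\times\mathbb{R}^{d}\times\mathbb{T}$ on which the dynamics is $(\xi,t)\mapsto(\xi+\omega t,t)$. Reading the embedding of this torus as graphs over the coordinates $(\xi,\eta,t)$ produces the desired coordinate change $\psi$, completing the proof.
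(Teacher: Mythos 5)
Your proposal runs the paper's logic in reverse, and within this paper that makes it circular. The paper proves Theorem \ref{1.2} directly, by a Newton scheme: Whitney extension plus the Jackson--Moser--Zehnder smoothing (Lemma \ref{lem3}) decomposes $f=\sum_\nu f_\nu$, $g=\sum_\nu g_\nu$ into analytic pieces on shrinking complex domains, Lemma \ref{lem3.2} guarantees that the smoothing operator $S_s$ commutes with the involution $G:(x,y,t)\mapsto(-x,y,-t)$ so each piece is still reversible, and the iterative Lemma \ref{lem5.1} (homological equations, R\"ussmann small-divisor estimates, estimates of the new perturbations) produces the limit transformation $\Psi^\infty$ and the $C^0$ embedded torus. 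Theorem \ref{thm1} is then \emph{deduced from} Theorem \ref{1.2}, by regarding the map $A$ as the Poincar\'e map of a reversible periodic flow (Sevryuk's Proposition 4.5). So you cannot invoke Theorem \ref{thm1} to prove Theorem \ref{1.2}: in this paper Theorem \ref{thm1} has no independent proof, and the authors explain why a direct Moser-type proof of the mapping theorem is blocked --- it is not known that the smoothing of a reversible \emph{mapping} perturbation stays reversible, which is precisely why they pass to the flow, where Lemma \ref{lem3.2} settles the issue. (Your verification that $PG_0P=G_0$ for the Poincar\'e map of \eqref{a} is correct, but it is essentially the bridge the paper uses in the opposite direction.)

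Even if Theorem \ref{thm1} were granted as a black box, the reduction founders at the point you yourself flag and leave unresolved: hypothesis (A2) requires the second component of the map's perturbation to be $C^{\ell+d}$, while your $\tilde g(x,y)=\int_0^{2\pi} g(X(s;x,y),Y(s;x,y),s)\,ds$ involves the time-$2\pi$ flow, whose regularity in the initial data $(x,y)$ is limited by the $C^{\ell}$ field $f$; hence $\tilde g$ is generically only $C^{\ell}$, and exploiting $\dot y=g$ does not help, since the loss enters through the $x$-dependence of $g\circ(X,Y)$ via $X$. The paper never needs the Poincar\'e map to be smoother: the extra $d$ derivatives of $g$ are instead converted by the smoothing at scale $s_\nu$ into the extra smallness $\|g_\nu\|_{s_\nu,r_\nu}\le C\varepsilon_\nu s_\nu^{d}$, which is exactly the asymmetry the iteration in Lemma \ref{lem5.1} is built on. To repair your route you would have to prove a mapping KAM theorem with hypotheses matching what a $C^\ell$ Poincar\'e map actually satisfies, i.e.\ redo the analytic core of the paper, so the proposal as written does not constitute a proof of Theorem \ref{1.2}.
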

\begin{rem}\label{rem0}
The first equation in \eqref{a} can be replaced by $\dot{x}=\omega+\alpha(y)+f(x, y, t)$ with $\partial_{y}\alpha(y)\geq C_{0}>0$ in the sense of positive definite matrix. In Theorem \ref{1.2}, the condition that $f$ and $g$ are $2\pi$ periodic in time $t$ can be generalized to that $f$ and $g$ are quasi-periodic with frequency $\tilde\omega\in \mathbb R^{\tilde d}$ ($\tilde d\ge 1$) in time $t$, if we take $\ell=2(d+\tilde d)-1+\mu$ and replace the condition $(a1)$ by $(\omega,\tilde\omega)\in\mathbb R^{d+\tilde d}\bigcap DC(\kappa,\tau)$ with $0<\kappa<1, \; \tau>d+\tilde d$.
\end{rem}

\begin{rem}\label{rem1}
For exact and area-preserving twist maps on annulus, it was proved by Herman in \cite{Her83} that unperturbed invariant curves can be destructed by $C^{3-\delta}$
$(0<\delta <1)$ arbitrarily small perturbation, and the unperturbed invariant curves can be preserved by $C^{3+\delta}$ sufficiently small
perturbation for some special Diophantine frequency $\omega$ of zero Lebesgue measure but infinitely many numbers.
 Recently,  Cheng-Wang \cite{Cheng} showed that for an integrable Hamiltonian
$H_{0}=\frac{1}{2}\sum_{i}^{d}y_{i}^{2}$ $(d\geq 2),$
any Lagrangian torus with a given unique rotation vector can be destructed by arbitrarily $C^{2d-\delta}$
small Hamiltonian perturbations. Checking those counter-examples above, it was seen that the results on
destructed invariant tori hold still true if an additional reversible condition is imposed on the symplectic mapping or Hamiltonian vector. So the  optimal smoothness of the reversible mapping $A$ depending periodically on time $t$ should be larger or equal to $2 d+1+\mu$. It is not clear whether the smoothness $3 d+1+\mu$  is optimal or this requirement is a shortcoming of our proof. It is worth to investigate further.
\end{rem}
\begin{rem}
The proof of Theorem \ref{thm1} is different from that of Moser's twist theorem. We recall that Moser invoked the smooth approximation technique from Nash's idea to
overcome the loss of regularity during Newton iteration. More exactly, Moser used a smoothness operator, say $S_{s}$, to decompose the perturbation vector field $(f, g)$
into $(f, g)=(S_{s}f+(1-S_{s})f, S_{s}g+(1-S_{s})g),$ where $S_{s}f$ and $S_{s}g$ are more smooth than $f$ and $g,$ and $(1-S_{s})f$ and $(1-S_{s})g$ are smaller than $f$
 and $g.$ Then he eliminated the perturbations $(S_{s}f, S_{s}g).$ An important fact is that $(S_{s}f, S_{s}g)$ is still symplectic if $(f, g)$ is symplectic. Unfortunately,
 when $(f, g)$ is reversible with respect to the involution $G,$ we do not know if $(S_{s}f, S_{s}g)$ is, too, reversible with respect to $G.$ So we could not transplant Moser's
 trick to deal with the reversible mapping of finite smoothness. In the present paper, we regard the reversible mapping $A$ in Theorem
 \ref{thm1} as the Poincare map of a reversible differential equation. And then we construct a KAM theorem (Theorem \ref{1.2}) for a reversible differential equation which
 is periodic in time. Then we proved Theorem \ref{thm1} by using Theorem \ref{1.2}.
\end{rem}

This paper is organized as follows: Section 2 we give out the approximation theorem of Jackson-Moser-Zhender. In Section 3, we give out iterative constants and iterative domains in Newton iteration. In Section 4, we give out the key iterative lemma (See Lemma \ref{lem5.1}).
Using Lemma \ref{lem5.1} further, we give proof of Theorems \ref{1.2} and \ref{thm1} .
In Section 5, we derive the homological equations and give out the estimates of the solutions of the homological equations to eliminates perturbations. In Section 6, we make the estimate of new perturbations. In Section 7, we give an application of the obtained Theorem \ref{thm1} to the Lagrange stability for reversible Duffing equation with finite smooth nonlinear perturbation.

\section{Approximation Lemma }

First we denote by $|\cdot|$ the norm of any finite dimensional Euclidean space.
Let $C^\mu(\mathbb R^m)$ for $0<\mu<1$ denote the space of bounded H\"older continuous functions $f:\; \mathbb R^m\to \mathbb R^n$ with the norm
\[|f|_{C^\mu}=\sup_{0<|x-y|<1}\frac{|f(x)-f(y)|}{|x-y|^\mu}+\sup_{x\in\mathbb R^m}|f(x)|.\]
If $\mu=0$ the $|f|_{C^\mu}$ denotes the sup-norm. For $\ell=k+\mu$ with $k\in\mathbb N$ and $0\le \mu<1$ we denote by $C^\ell(\mathbb R^m)$
the space of functions $f:\; \mathbb R^m\to \mathbb R^n$ with H\"older continuous partial derivatives $\p^\alpha\, f
\in\, C^\mu(\mathbb R^m)$ for all multi-indices $\alpha=(\alpha_1,...,\alpha_m)\in\mathbb N^m$ with $|\alpha|=\alpha_1+...+\alpha_m\le k
$. We define the norm
\[|f|_{C^\ell}:=\sum_{|\alpha|\le \ell} |\p^\alpha f|_{C^\mu}\] for $\mu=\ell-[\ell]<1$. In order to give an approximate lemma, we define the kernel function
\[K(x)=\frac{1}{(2\pi)^m}\int_{\mathbb R^m}\widehat{K}(\xi)e^{\mathbf{i}\, \<x,\xi\>}\, d\xi,\; x\in\mathbb C^m,\]
where $\widehat{K}(\xi)$ is a $C^\infty$ function with compact support, contained in the ball $|\xi|\le a$ with a constant $a>0$, that satisfies
\[\p^\alpha\,\widehat{K}(0)=\begin{cases} 1, & \text{if}\, \alpha=0,\\ 0, &  \text{if}\, \alpha\neq 0.\end{cases} \]
Then $K:\, \mathbb C^m\to \mathbb R^n$ is a real analytic function  with the property that for every $j>0$ and every $p>0$, there exists a constant $c_1=c_1(j,p)>0$ such that for all $\beta\in\mathbb N^m$ with $|\beta|\le j$,
\begin{equation}
\left|\p^\beta\, K(x+\mathbf{i} y) \right|\le c_{1} (1+|x|)^{-p}e^{a|y|},\;\; x,y\in\mathbb R^m.\label{y1}
\end{equation}


\begin{lemma}\label{lem3}(Jackson-Moser-Zehnder)
There is a family of convolution operators
 \be (S_{s}F)(x)=s^{-m}\int_{\mathbb{R}^{m}}K(s^{-1}(x-y))F(y)dy,\;\;0<s\leq 1,\;\;\forall\;F\in C^{0}(\mathbb{R}^{m})\label{y2}\ee from $C^{0}(\mathbb{R}^{m})$ into the linear space of entire (vector) functions on $\mathbb{C}^{m}$ such that for
every $\ell>0$ there exist a constant $c=c(\ell)>0$ with the following properties: If $F\in C^{\ell}(\mathbb{R}^{m}),$
 then for $|\alpha|\leq \ell$ and $|\mathrm{Im} x|\leq s,$
\begin{equation}\label{2.11}
|\partial^{\alpha}(S_{s}F)(x)-\sum_{|\beta|\leq \ell-|\alpha|}\partial^{\alpha+\beta}F(\mathrm{Re} x)({\bf{i}}\, \mathrm{Im} x)^{\beta}/\beta!|\leq c\,|F|_{C^{\ell}}s^{\ell-|\alpha|}.
\end{equation}
Moreover, in the real case\begin{eqnarray}
&& |S_{s}F-F|_{C^{p}}\leq c |F|_{C^{\ell}}s^{\ell-p},\;\;p\leq \ell,\label{2.13-1}\\
&& |S_{s}F|_{C^{p}}\leq c|F|_{C^{\ell}}s^{\ell-p},\;\;p\leq \ell.\label{2.13}
\end{eqnarray}
Finally, if $F$ is periodic in some variables then so are the approximating functions $S_{s}F$ in the same variables.
\end{lemma}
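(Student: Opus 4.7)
The proof rests on two consequences of the hypotheses on $\hat K$. Since $\hat K\in C^\infty$ with compact support in $\{|\xi|\le a\}$, the Paley--Wiener theorem yields that $K$ is entire on $\mathbb C^m$ with controlled exponential growth on strips and rapid polynomial decay on real slices --- this is exactly \eqref{y1}. And since $\partial^\alpha\hat K(0)=\delta_{\alpha,0}$, differentiating the Fourier representation at $\xi=0$ gives the moment conditions $\int_{\mathbb R^m}z^\gamma K(z)\,dz=\delta_{\gamma,0}$ for every $\gamma\in\mathbb N^m$. These two facts do all the work in the argument below.

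First I would check that $S_sF$ extends to an entire function on $\mathbb C^m$: estimate \eqref{y1} with $p$ large provides an integrable majorant uniform on compacta, which justifies differentiation under the integral in \eqref{y2} and yields
\[
\partial^\alpha_x(S_sF)(x)=s^{-m-|\alpha|}\int_{\mathbb R^m}(\partial^\alpha K)(s^{-1}(x-y))F(y)\,dy
\]
for every multi-index $\alpha$. To prove \eqref{2.11}, I would fix $x=x_0+\mathbf{i}\eta$ with $|\eta|\le s$ and change variable $y=x_0-sz$, so that $s^{-1}(x-y)=z+\mathbf{i}\eta/s$ with $|\eta/s|\le 1$; by \eqref{y1} the factor $(\partial^\alpha K)(z+\mathbf{i}\eta/s)$ is then integrable in $z$ uniformly in such $\eta$. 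Next, I would apply a H\"older Taylor expansion of $F$ at the real point $x_0$ to order $k=[\ell]$, whose remainder is bounded by $c|F|_{C^\ell}(s|z|)^\ell$, and split the resulting integral into a polynomial main part and an error part.

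The heart of the argument is the evaluation of the main-part integrals $J_\gamma(\eta):=\int(\partial^\alpha K)(z+\mathbf{i}\eta/s)\,z^\gamma\,dz$ for $|\gamma|\le k$. Using the analyticity of $K$ and the decay \eqref{y1} to justify a contour shift, I would translate the contour by $-\mathbf{i}\eta/s$, integrate $\partial^\alpha K$ by parts $|\alpha|$ times against the polynomial $(z-\mathbf{i}\eta/s)^\gamma$, and then invoke the moment conditions to get $\int(\partial^\alpha K)(z)\,z^\delta\,dz=(-1)^{|\alpha|}\alpha!\,\delta_{\delta,\alpha}$. A multinomial rearrangement collapses the main part precisely to $\sum_{|\beta|\le k-|\alpha|}\partial^{\alpha+\beta}F(x_0)(\mathbf{i}\eta)^\beta/\beta!$, which agrees with the sum in \eqref{2.11} since $|\beta|$ is an integer and $\ell-[\ell]<1$. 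The error part, by the Taylor remainder bound and \eqref{y1} with $p$ large, integrates to at most $c|F|_{C^\ell}s^\ell\cdot s^{-|\alpha|}=c|F|_{C^\ell}s^{\ell-|\alpha|}$, completing \eqref{2.11}. Estimate \eqref{2.13-1} is then the real-variable case $\eta=0$ of \eqref{2.11}, while \eqref{2.13} follows by combining \eqref{2.13-1} with the obvious bound $|F|_{C^p}\le|F|_{C^\ell}$ (or by the same Taylor argument using only the zeroth moment). Periodicity in any variable in which $F$ is periodic is automatic from the translation-invariance of convolution. The main obstacle is purely the multi-index bookkeeping in the contour-shift/integration-by-parts step: every identity there is forced by the vanishing of higher moments of $K$, but writing it out cleanly requires patience.
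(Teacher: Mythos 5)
The paper does not prove Lemma \ref{lem3} at all --- Remark \ref{rem**} simply points the reader to Salamon (2004) and Zehnder (1976). Your sketch is a faithful reconstruction of the standard Jackson--Moser--Zehnder argument found in those references: Paley--Wiener for the analyticity and decay \eqref{y1} of $K$, the moment conditions $\int z^\gamma K(z)\,dz=\delta_{\gamma,0}$ from $\partial^\alpha\widehat K(0)=\delta_{\alpha,0}$, the change of variables, the H\"older--Taylor expansion of $F$ at ${\rm Re}\,x$, and the contour shift plus integration by parts to identify the polynomial part. The bookkeeping you outline does collapse the main part to $\sum_{|\beta|\le\ell-|\alpha|}\partial^{\alpha+\beta}F({\rm Re}\,x)(\mathbf{i}\,{\rm Im}\,x)^\beta/\beta!$ and the remainder to $c|F|_{C^\ell}s^{\ell-|\alpha|}$, so \eqref{2.11}, and from it the sup-norm part of \eqref{2.13-1}, are correctly obtained.

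However, your treatment of \eqref{2.13} is a genuine gap. Combining \eqref{2.13-1} with $|F|_{C^p}\le|F|_{C^\ell}$ only gives $|S_sF|_{C^p}\le c|F|_{C^\ell}s^{\ell-p}+|F|_{C^\ell}\le C|F|_{C^\ell}$, which has no decaying power of $s$; this does not establish \eqref{2.13}. In fact \eqref{2.13} as printed, with $p\le\ell$, cannot be correct: for $p<\ell$ the right-hand side tends to $0$ as $s\to 0$ while the left-hand side tends to $|F|_{C^p}$. This is a misprint in the paper --- the intended and standard estimate is $|S_sF|_{C^p}\le c\,|F|_{C^\ell}\,s^{\ell-p}$ for $p\ge\ell$, which expresses the smoothing gain at the cost of $s^{-1}$ powers, and which one proves by combining the uniform bound on the strip $|{\rm Im}\,x|\le s$ (from \eqref{2.11} with $\alpha=0$, cf.\ \eqref{3.x}) with Cauchy estimates. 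Your proof inherits the misprint uncritically and, even under it, the proposed "obvious bound" argument does not yield the claimed inequality. Separately, a minor point: deducing \eqref{2.13-1} from ``$\eta=0$ in \eqref{2.11}'' only controls $\sup|\partial^\alpha(S_sF-F)|$ for integer $|\alpha|\le p$; the H\"older seminorm component of the $C^p$-norm (when $p\notin\mathbb{N}$) requires an extra interpolation step, which you should at least mention.
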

\begin{rem}\label{rem**}
 Moreover we point out that from \eqref{2.13} one can easily deduce the following well-known
convexity estimates which will be  used later on
\begin{eqnarray}
&&|f|_{C^{\alpha}}^{l-k}\leq c|f|_{C^{k}}^{l-\alpha}|f|_{C^{l}}^{\alpha-k},\;\;k\leq \alpha\leq l,\label{5.4-1}\\
&& |f\cdot g|_{C^{s}}\leq c(|f|_{C^{s}}|g|_{C^{0}}+|f|_{C^{0}}|g|_{C^{s}}),\;\;s\geq 0.\label{5.5-1}
\end{eqnarray}
See \cite{Salamon2004, Zehnder1976} for the proofs of Lemma \ref{lem3} and the inequalities \eqref{5.4-1} and \eqref{5.5-1}.
\end{rem}
\begin{rem}\label{rem**y} From the definition of the operator $S_s$, we clearly have
\be \sup_{x,y\in\mathbb R^m,|y|\le s}\, \left|S_s\, F(x+\mathbf{i}\, y) \right|\le C |F|_{C^0}.\label{3.x}\ee
In fact, by the definition of  $S_s$, we have that for any $x,y\in\mathbb R^m$ with $|y|\le s$,
\begin{eqnarray}\left|S_s\, F(x+\mathbf{i}\, y) \right|&=&\left|s^{-m}\int_{\mathbb R^m}K(s^{-1}(x+\mathbf{i}\, y-z))F(z)\, d\,z \right|
\\
&=& \left|\int_{\mathbb R^m}K(\mathbf{i}\,s^{-1} y+\xi)F(x-s \xi)\, d\,\xi\right| \\
&\le & |F|_{C^0}\, \int_{\mathbb R^m}\left|K(\mathbf{i}\,s^{-1} y+\xi)\right|\, d\,\xi\\
&\le & C \, |F|_{C^0},
\end{eqnarray}
where we used \eqref{y1} in the last inequality. The following lemma shows that the operator $S_{s}$ commutes with the involution map
$G: (x, y, t)\mapsto (-x, y, -t).$
\end{rem}

\begin{lemma}\label{lem3.2}
Let $m=2d+1$ in Lemma \ref{lem3}. Then

(1)when $F(-x, y, -t)=-F(x, y, t),$ for $\forall (x, y, t)\in\mathbb{T}^{d}\times B(r)\times \mathbb{T}$ with $r>0,$ we have
              $$S_{s} F(-x, y, -t)=- S_{s}F(x, y, t).$$

(2) when $F(-x, y, -t)=F(x, y, t),$ for $\forall (x, y, t)\in\mathbb{T}^{d}\times B(r)\times \mathbb{T}$ with $r>0,$ we have
              $$S_{s} F(-x, y, -t)=S_{s}F(x, y, t).$$

\end{lemma}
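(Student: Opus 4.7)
The plan is to reduce both statements to a single symmetry property of the convolution kernel, namely $K(Gz)=K(z)$ for all $z\in\mathbb R^{2d+1}$, where we identify the involution $G$ with the linear map $\mathrm{diag}(-I_d,I_d,-1)$ on $\mathbb R^{2d+1}$. Once that is established, the two claims become an immediate change-of-variable computation in the integral \eqref{y2}, since the cases (1) and (2) differ only by the sign with which $F$ intertwines $G$.

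First I would verify that we may assume, without loss of generality, that $\widehat K$ is $G$-invariant. The only constraints placed on $\widehat K$ are that it be $C^\infty$, compactly supported in $|\xi|\le a$, and that $\partial^\alpha\widehat K(0)=\delta_{\alpha,0}$. Replacing $\widehat K(\xi)$ by the symmetrized $\tfrac12\bigl(\widehat K(\xi)+\widehat K(G\xi)\bigr)$ preserves each of these properties (it is $C^\infty$, still supported in $|\xi|\le a$ because $G$ is an orthogonal involution, and $\partial^\alpha$ at $0$ is unchanged since $G0=0$ and derivatives commute appropriately with the linear change $\xi\mapsto G\xi$). Once $\widehat K(G\xi)=\widehat K(\xi)$, the Fourier-inversion formula together with $G^T=G$, $|\det G|=1$, and the substitution $\eta=G\xi$ gives
\[
K(Gz)=\frac{1}{(2\pi)^m}\int \widehat K(\xi)\,e^{\mathbf i\<Gz,\xi\>}d\xi
=\frac{1}{(2\pi)^m}\int \widehat K(G\eta)\,e^{\mathbf i\<z,\eta\>}d\eta=K(z).
\]

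Next, given this $G$-invariance of $K$, I would substitute $w=Gw'$ into the definition
\[
(S_sF)(Gz)=s^{-m}\int_{\mathbb R^{2d+1}}K\bigl(s^{-1}(Gz-w)\bigr)F(w)\,dw.
\]
Because $G^2=I$ and $|\det G|=1$, the change of variables yields
\[
(S_sF)(Gz)=s^{-m}\int K\bigl(G\cdot s^{-1}(z-w')\bigr)F(Gw')\,dw'
= s^{-m}\int K\bigl(s^{-1}(z-w')\bigr)F(Gw')\,dw',
\]
where the last equality uses $K(Gu)=K(u)$. Substituting $F(Gw')=-F(w')$ in case (1) and $F(Gw')=F(w')$ in case (2) immediately produces $(S_sF)(Gz)=\mp (S_sF)(z)$ with the correct sign. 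Periodicity in $x$ and $t$ is inherited automatically from the final assertion of Lemma \ref{lem3}, so the restriction to $\mathbb T^d\times B(r)\times\mathbb T$ poses no issue.

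The only point that requires care is the initial freedom in choosing $\widehat K$ to be $G$-invariant; the commented remark following Lemma \ref{lem3} in the manuscript already indicates that such extra symmetry conditions are harmless and preserve all the estimates \eqref{2.11}--\eqref{2.13}. Everything else is a direct symmetry argument, so I do not expect any genuine obstacle.
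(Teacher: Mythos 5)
Your proposal is correct and follows essentially the same route as the paper: choose (or symmetrize) $\widehat K$ to be invariant under $G=\mathrm{diag}(-I_d,I_d,-1)$, deduce the corresponding symmetry of $K$, and conclude by a change of variables $w\mapsto Gw$ in the convolution \eqref{y2} together with the parity of $F$. Your explicit symmetrization of $\widehat K$ and the clean statement $K(Gz)=K(z)$ (absolute Jacobian equal to $1$) merely make precise what the paper asserts with its ``clearly we can choose'' and its signed-Jacobian bookkeeping factor $(-1)^{d+1}$, so there is no substantive difference.
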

\begin{proof}
Let $\xi=(\xi_{1}, \xi_{2}, t)\in \mathbb{R}^{d}\times \mathbb{R}^{d}\times\mathbb{R}=\mathbb{R}^{m}.$ Clearly we can choose the kernel function
$\widehat{K}(\xi)$ such that
$$\widehat{K}(-\xi_{1}, \xi_{2}, -t)=\widehat{K}(\xi_{1}, \xi_{2}, t),\;\;\forall \xi\in\mathbb{R}^{m}.$$
It follows that
$$K(-x, y, -t)=(-1)^{d+1}K(x, y, t),\;\;\forall (x, y, t)\in \mathbb{R}^{m}.$$
By the definition of $S_{s},$
$$S_{s}F(x, y, t)=s^{-(2d+1)}\int_{\mathbb{R}^{d}\times \mathbb{R}^{d}\times \mathbb{R}}
K(s^{-1}(x-\tilde{x}, y-\tilde{y}, t-\tilde{t}))F(\tilde{x}, \tilde{y}, \tilde{t})d \tilde{x} d \tilde{y} d \tilde{t}.$$
So
\begin{eqnarray*}
S_{s}F(-x, y, -t)&=&s^{-(2d+1)}\int_{\mathbb{R}^{d}\times \mathbb{R}^{d}\times \mathbb{R}}
K(s^{-1}(-x-\tilde{x}, y-\tilde{y}, -t-\tilde{t}))F(\tilde{x}, \tilde{y}, \tilde{t})d \tilde{x} d \tilde{y} d \tilde{t}\\
&=& s^{-(2d+1)}(-1)^{d+1}\int_{\mathbb{R}^{d}\times \mathbb{R}^{d}\times \mathbb{R}}
K(s^{-1}(x+\tilde{x}, y-\tilde{y}, t+\tilde{t}))F(\tilde{x}, \tilde{y}, \tilde{t})d \tilde{x} d \tilde{y} d \tilde{t}\\
&=&s^{-(2d+1)}\int_{\mathbb{R}^{d}\times \mathbb{R}^{d}\times \mathbb{R}}
K(s^{-1}(x-{x}^{*}, y-\tilde{y}, t-{t}^{*}))F(-{x}^{*}, \tilde{y}, -{t}^{*})d {x}^{*} d \tilde{y} d {t}^{*}\\
&=& \mp s^{-(2d+1)}\int_{\mathbb{R}^{d}\times \mathbb{R}^{d}\times \mathbb{R}}
K(s^{-1}(x-\tilde{x}, y-\tilde{y}, t-\tilde{t}))F(\tilde{x}, \tilde{y}, \tilde{t})d \tilde{x} d \tilde{y} d \tilde{t}\\
&=& \mp S_{s}F(x, y, t),
\end{eqnarray*}
where $\mp=-$ for case (1), $\mp=+$ for case (2).
\end{proof}

Consider a $\mathbb{R}^{n}-$ valued function $F: D\rightarrow \mathbb{R}^{n}$ with
$$|F|_{C}^{\ell}(D)\leq \varepsilon.$$ Recall $D=\mathbb{T}^{d}\times B(r_{0})\times \mathbb{T}.$ By Whitney's extension theorem, we can
find a $\mathbb{R}^{n}-$ valued function $\widetilde{F}: \mathbb{T}^{d}\times \mathbb{R}^{d}\times \mathbb{T}\rightarrow \mathbb{R}^{n}$
such that
$\widetilde{F}\mid_{D}=F$ \;\;(i. e. $\widetilde{F}$ is the extension of $F$) and
$$|\widetilde{F}|_{C^{|\alpha|}(\mathbb{T}^{d}\times \mathbb{R}^{d}\times \mathbb{T})}\leq C_{\alpha}\mid F\mid _{C^{|\alpha|}(D)},
\;\;\forall \alpha\in \mathbb{Z}^{d}_{+},\;|\alpha|\leq \ell,$$
where $C_{\alpha}$ is a constant depends only $\ell$ and $d.$

Let $z=(x, y, t)$ for brevity, define, for $\forall s>0,$
$$(S_{s} \widetilde{F})(z)=s^{-(2d+1)}\int_{\mathbb{T}^{d}\times \mathbb{R}^{d}\times \mathbb{T}}K(s^{-1}(z-\tilde{z}))\widetilde{F}(\tilde{z})d\tilde{z}.$$
Let $\mathbb{T}^{d}_{s}=\{\phi\in (\mathbb{C}/ 2\pi \mathbb{Z})^{d}: |Im \phi|<s\},$ $\mathbb{R}^{d}_{s}=\{x\in \mathbb{C}^{d}\mid |Im x|<s\}.$
Fix a sequence of fast decreasing numbers $s_{\nu} \downarrow 0,$ $\nu\in \mathbb{Z}_{+}$ and $s_{0}\leq 1/2.$ Let
$$F^{(\nu)}(z)=(S_{s_{\nu}}\widetilde{F})(z),\;\;\nu\geq 0.$$
Then $F^{(\nu)}$'s $(\nu\geq 0)$ are entire functions in $\mathbb{C}^{2d+1},$ in particular, which obey the following properties.

\begin{itemize}
  \item [(1)] $F^{(\nu)}$'s $(\nu\geq 0)$ are real analytic \footnote{that is, $F^{\nu}(z)$ is analytic in
  $\mathbb{T}^{d}_{s_{\nu}}\times\mathbb{R}^{d}_{s_{\nu}}\times\mathbb{T}_{s_{\nu}}$, and is real when $z$ is real.}  on the complex domain $\mathbb{T}^{d}_{s_{\nu}}\times \mathbb{R}^{d}_{s_{\nu}}\times \mathbb{T}_{s_{\nu}}:=D_{s_{\nu}};$
  \item [(2)] The sequence of functions $F^{(\nu)}(z)$ satisfies the bounds
\begin{eqnarray}
&&\sup_{z\in D_{s_{\nu}}}|F^{(\nu)}(z)-F(z)|\leq C |F|_{C^{\ell}(D)}s_{\nu}^{\ell},\label{09.1}\\
&& \sup_{z\in D_{s_{\nu+1}}}|F^{(\nu+1)}(z)-F^{(\nu)}(z)|\leq C |F|_{C^{\ell}(D)}s_{\nu}^{\ell},\label{09.2}
\end{eqnarray}
where constants $C=C(d, \ell)$ depend on only $d$ and $\ell;$
  \item [(3)] The first approximate $F^{(0)}(z)=(S_{s_{0}}\widetilde{F})(z)$ is ``small" with respect to $F.$ Precisely,
             \begin{equation}\label{09.3}
             |F^{(0)}(z)|\leq C|F|_{C^{\ell}(D)},\;\;\forall z\in D_{s_{0}},
             \end{equation}
where constant $C=C(d, \ell)$ is independent of $s_{0}.$
  \item [(4)] From Lemma \ref{lem3}, we have that
  \be\label{09.4}F(z)=F^{(0)}(z)+\sum_{\nu=0}^{\infty}(F^{(\nu+1)}(z)-F^{(\nu)}(z)),\;\;\forall z\in D.\ee
  Let
 \be \label{09.5}F_{0}(z)=F^{(0)}(z),\;\;F_{\nu+1}(z)=F^{\nu+1}(z)-F^{\nu}(z).\ee
 Then
 \be\label{09.6}F(z)=\sum_{\nu=0}^{\infty}F_{\nu}(z),\;\;\forall z\in D.\ee
\end{itemize}

By Lemma\ref{lem3.2}, we have
\begin{eqnarray}
&& F_{\nu}(-x, y, -t)=-F_{\nu}(x, y, t),\;\;\text{if}\;\;F(-x, y, -t)=-F(x, y, t),\label{09.7}\\
&& F_{\nu}(-x, y, -t)=F_{\nu}(x, y, t),\;\;\text{if}\;\;F(-x, y, -t)=F(x, y, t).\label{09.8}
\end{eqnarray}


\section{Iterative constants}
\begin{itemize}
\item Given constant $\mu$ with $0<\mu\ll 1$, and let $\tau=d+\frac{\mu}{100}$, $\widetilde{\mu}=\frac{\mu}{100(2\tau+1+\mu)};$
 \item $\ell=2 d+1+\mu\;;$
  \item $\varepsilon_{0}=\varepsilon,$ \footnote{We hope that the readers are able to distinguish this $\varepsilon_{0}$ with that in Theorems 1.1 and 1.2.} $\varepsilon_{\nu}=\varepsilon^{(1+{\widetilde{\mu}})^{\nu}},$
  $\nu=0, 1, 2, \cdots,$ which measures the size of perturbation at $\nu-$th step of Newton iteration;
  \item   $s_{\nu}=\varepsilon_{\nu}^{1/\ell},\;\;\nu=0, 1, 2, \cdots$, which measures the width of angle variable in analytic approximation;
\item $r_{\nu}=s_{\nu}^{d+1+\frac{\mu}{10}},\;\;\nu=0, 1, 2, \cdots$, which measures the size of action variable in analytic approximation;
  \item
$s^{(j)}_{\nu}=s_{\nu}-\frac{j}{100\, \ell}(s_{\nu}-s_{\nu+1}),\; j=1,2,...,100\, \ell$, which are bridges between $s_{\nu}$ and $s_{\nu+1}$;
 \item
$r^{(j)}_{\nu}=r_{\nu}-\frac{j}{100\, \ell}(r_{\nu}-r_{\nu+1}),\; j=1,2,...,100\, \ell$, which are bridges between $r_{\nu}$ and $r_{\nu+1}$;
\item
$B_{\mathbb{C}}(r)=\{y\in \mathbb{C}^{d}: |y|\leq r\},$ for $\forall r\geq 0;$
  \item $D(s, r)=\mathbb{T}^{d}_{s}\times B_{\mathbb{C}}(r)\times \mathbb{T}_{s},\;\;\forall r\geq 0, s\geq 0.$
  \item
  For a $\mathbb{C}^{n}-$ valued function $F(x, y, t)$ analytic in $D(s, r),$ denote
  $$||F||_{s, r}=\sup_{z=(x, y, t)\in D(s, r)}|F(z)|,$$ here (and other places) $|\cdot|$ is Euclidean norm.
\end{itemize}

\section{Iterative Lemma}\label{lemma4.1}

Let us return to function $f=f(x, y, t),$ $g=g(x, y, t)$ in Theorem \ref{1.2}. Let $z=(x, y,t)$ for brevity.
With the above preparation, we can rewrite equation \eqref{a} in Theorem \ref{1.2} as follows:
\be \label{92.0}\dot{x}=\omega+y+\sum_{\nu=0}^{\infty}f_{\nu}(z),\;\dot{y}=\sum_{\nu=0}^{\infty}g_{\nu}(z),\ee
where
\be\label{92.1} f_{\nu}, g_{\nu}: \mathbb{T}^{d}_{s_{\nu}}\times \mathbb{R}^{d}_{s_{\nu}}\times\mathbb{T}_{s_{\nu}}\rightarrow\mathbb{C}^{n}\ee
are real analytic, and
\be\label{92.2}\parallel f_{\nu}\parallel_{s_{\nu}, r_{\nu}}\leq C \varepsilon_{\nu},\;\;\parallel g_{\nu}\parallel_{s_{\nu},r_{\nu}}\leq C\varepsilon_{\nu}s_{\nu}^{d}\ee
and for any $(x,y,t)\in D(s_{\nu}, r_{\nu})$
\be\label{92.3} f_{\nu}(-x, y, -t)=f_{\nu}(x, y, t),\;\;g_{\nu}(-x, y, -t)=-g_{\nu}(x, y, t).\ee
The basic idea in KAM theory is to kill the perturbations $f$ and $g$ by Newton iteration. The procedure of the iteration is as follows:

$1^{st}$ step: to search for a involution map $\Phi_{0}$ (which keeps the involution $G: (x, y, t)\mapsto (-x, y, -t)$ unchanged) such that the analytic vector-field
$${(a)_{0}:\;\; (\omega+y+f_{0}, g_{0})}$$ is changed by $\Phi_{0}$ into
\begin{eqnarray*}
(a)_{0}\circ \Phi_{0}&=&(\omega+y+f_{0}, g_{0})\circ \Phi_{0}\\
&=&(\omega+y+f^{0}_{1}, g_{1}^{0}):=(a)_{1},
\end{eqnarray*}
where $f^{0}_{1}=O (\varepsilon_{1})$ and $g^{1}_{0}=O(\varepsilon_{1}s^{d}_{1}).$

$2^{nd}$ step: to search for a involution map $\Phi_{1}$ such that $(a)_{1}+(f_{1}, g_{1})$ is changed into
$(\omega+y+f_{2}^{0}, g_{2}^{0}),$
where $f_{2}^{0}=O(\varepsilon_{2},)$ $g_{2}^{0}=O(\varepsilon_{2}s_{2}^{d})$.
The combination of steps 1 and 2 implies that $(\omega+y+f_{0}+f_{1}, g_{0}+g_{1})\circ \Phi_{0}\circ\Phi_{1}=(\omega+y+f^{0}_{2}, g^{0}_{2}).$
Repeating the above procedure, at $m+1^{th}$ step, we have that $(a)_{m}+(f_{m}, g_{m})$ is changed by $\Phi_{m}$ into
$(\omega+y+f^{0}_{m+1}, g_{m+1}^{0}),$
where $f_{m+1}^{0}=O(\varepsilon_{m+1}),$ $g_{m+1}^{0}=O(\varepsilon_{m+1}s_{m+1}^{d}).$
That is, $(\omega+y+\sum_{j=0}^{m}f_{j}, \sum_{j=0}^{m}g_{j})\circ \Phi^{(m)}=(\omega+y+f^{0}_{m+1}, g_{m+1}^{0}),$
where $\Phi^{(m)}=\Phi_{0}\circ\Phi_{1}\circ\cdots \circ\Phi_{m}.$
Finally letting $m\rightarrow \infty,$ and letting
$$\Phi^{\infty}:=\lim_{m\rightarrow \infty}\Phi^{(m)},$$
we have
\begin{eqnarray*}
&&(\omega+y+f,g)\circ \Phi^{\infty}\\
&&=\lim_{m\rightarrow \infty} (\omega+y+\sum_{j=0}^{m}f_{j}, \sum_{j=0}^{m}g_{j})\circ \Phi^{(m)}\\
&&=\lim_{m\rightarrow \infty} (\omega+y+f_{m+1}^{0}, g_{m+1}^{0})\\
&&=\lim_{m\rightarrow \infty} (\omega+y+O(\varepsilon_{m+1}), O(\varepsilon_{m+1}s_{m+1}^{d}))\\
&&=(\omega+y, 0).
\end{eqnarray*}
From this, we see that $(\Phi^{\infty})^{-1}(\{x=\omega t\}\times \{y=0\}\times \{t: t\in \mathbb{T}\})$ is an invariant torus of the original vector-field.
This iterative procedure can be found in \cite{Chierchia-Qian}. The following iterative Lemma is a
materialization of the above iterative procedure.
\begin{lemma}\label{lem5.1}(Iterative Lemma)
Let $\omega\in DC(\kappa, \tau).$
Assume that we have $m$ coordinate changes $\Phi_{0}=\Psi_{0}^{-1},\cdots, \Phi_{m-1}=\Psi_{m-1}^{-1},$ which obey
$$\Psi_{j}: D(s_{j}, r_{j})\rightarrow D(s_{j-1}, r_{j-1})\;\;(j=0, 1, \cdots, m-1)$$ of the form
$$\Psi_{j}: x=\tilde{x}+u_{j}(\tilde{x}, \tilde{y}, t),\;y=\tilde{y}+v_{j}(\tilde{x}, \tilde{y}, t),\;(j=0, 1, \cdots, m-1)$$ and
$u_{j},$ $v_{j}$ are real for real arguments and analytic in each argument with estimates
\be\label{y5.1}\parallel u_{j}\parallel_{s_{j}, r_{j}}\leq C\varepsilon_{j}s_{j}^{-d}, \parallel v_{j}\parallel_{s_{j}, r_{j}}\leq C\varepsilon_{j}\;\;(j=0, 1, \cdots, m-1)\ee
such that the system of equations
$$(a)^{(m-1)}: \left\{
                 \begin{array}{ll}
                    \dot{\tilde{x}}=\omega+\tilde{y}+\sum_{j=0}^{m-1}f_{j}(\tilde{x}, \tilde{y}, t),\\
                    \dot{\tilde{y}}=\sum_{j=0}^{m-1}g_{j}(\tilde{x}, \tilde{y}, t)
                 \end{array}
               \right.
$$
is changed by $\Phi^{(m-1)}=\Phi_{0}\circ \cdots\circ\Phi_{m-1}$ into
$$(a)_{*}^{(m-1)}: \left\{
                 \begin{array}{ll}
                    \dot{{x}}=\omega+{y}+f^{0}_{m}({x}, {y}, t),\\
                    \dot{{y}}=g^{0}_{m}({x}, {y}, t),
                 \end{array}
               \right.
$$ where $f_{m}^{0},$ $g_{m}^{0}$ obey

\begin{itemize}
  \item [$(1)_{m}$] The functions $f_{m}^{0}$, $g_{m}^{0}$ are real for real arguments;
  \item [$(2)_{m}$] The functions $f_{m}^{0}$, $g_{m}^{0}$ are analytic in $D(s_{m}, r_{m})$ with estimates
\be\label{y5.1} \parallel f_{m}^{0}\parallel_{s_{m}, r_{m}}\leq C \varepsilon_{m},\;\;\parallel g_{m}^{0}\parallel_{s_{m}, r_{m}}\leq C \varepsilon_{m}s_{m}^{d};\ee
  \item [$(3)_{m}$] The functions $f_{m}^{0}$, $g_{m}^{0}$ is reversible with respect to involution map: $G: (x, y, t)\mapsto (-x, y, -t),$ that is,
\be\label{y5.2} f_{m}^{0}(-x, y, -t)=f_{m}^{0}(x, y, t),\;\;g_{m}^{0}(-x, y, -t)=-g_{m}(x, y, t).\ee
\end{itemize}
Then there is a coordinate change $\Phi_{m}=\Psi^{-1}_{m}:$
$$\Psi_{m}: D(s_{m+1}, r_{m+1})\rightarrow D(s_{m}, r_{m})$$ of the form
\be\label{y5.3} \Psi_{m}: \xi=x+u_{m}(x, y, t),\;\;\eta=y+v_{m}(x, y, t)\ee
and
\be\label{y5.4} \parallel u_{m}\parallel_{s_{m}, r_{m}}\leq C \varepsilon_{m}s_{m}^{-d},\;\;\parallel v_{m}\parallel_{s_{m}, r_{m}}\leq C\varepsilon_{m}\ee
such that $\Psi_{m},$ which is reversible with respect to $G: (x, y, t)\mapsto (-x, y, -t),$ changes the modified equations:
\be\label{y5.5}
(a_{j})^{*}:  \left\{
                 \begin{array}{ll}
                   \dot{x}=\omega+y+f_{m}^{0}(x, y, t)+f_{m}(x, y, t)\\
                   \dot{y}=g_{m}^{0}(x, y, t)+g_{m}(x, y, t)
                 \end{array}
               \right.
\ee
into
\be \label{eq4.11}(a)^{(m)}_{*}: \left\{
                     \begin{array}{ll}
                        \dot{\xi}=\omega+\eta+f_{m+1}^{0}(\xi, \eta, t),\\
                        \dot{\eta}=g_{m+1}^{0}(\xi, \eta, t),
                     \end{array}
                   \right.
\ee
where $f_{m+1}^{0}$ and $g_{m+1}^{0}$ obey the conditions $(1)_{m},$ $(2)_{m}$ and $(3)_{m}$ by replacing $m$ by $m+1.$
In other words, $\Phi^{(m)}:=\Phi^{(m-1)}\circ\Phi_{m}$ changes
\be\label{eq5.10}
\left\{
  \begin{array}{ll}
     \dot{x}=\omega+y+\sum_{j=0}^{m}f_{j}(x, y, t)\\
     \dot{y}=\sum_{j=0}^{m}g_{j}(x, y, t)
  \end{array}
\right.
\ee
into $(a)^{(m)}_{*}.$

\end{lemma}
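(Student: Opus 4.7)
The plan is to construct $\Psi_m$ in the standard KAM form displayed in \eqref{y5.3} and determine $(u_m,v_m)$ by solving a linearized (\emph{homological}) system. Writing $F:=f_m^0+f_m$, $G:=g_m^0+g_m$, and requiring that under the substitution $x=\xi+u_m$, $y=\eta+v_m$ the modified equations \eqref{y5.5} take the form $\dot\xi=\omega+\eta+O(\varepsilon_{m+1})$, $\dot\eta=O(\varepsilon_{m+1}s_{m+1}^d)$, I dump all terms bilinear in $(u_m,v_m,\tilde F,\tilde G)$ and the drift term $\eta\,\partial_\xi u_m$ into the new perturbation. The remaining linear part gives
\begin{equation*}
\partial_\omega v_m = G(\xi,0,t),\qquad \partial_\omega u_m - v_m = F(\xi,0,t),
\end{equation*}
with $\partial_\omega:=\partial_t+\omega\cdot\partial_\xi$.

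I then solve these by Fourier expansion in $(\xi,t)\in\mathbb T^{d+1}$. The zero-mean condition for the first equation, $\langle G(\cdot,0,\cdot)\rangle=0$, is automatic from the oddness in \eqref{y5.2}. For the second the condition is $\langle v_m\rangle=-\langle F(\cdot,0,\cdot)\rangle$, and since $\partial_\omega v_m=G$ pins $v_m$ down only up to an additive constant, I absorb $-\langle F\rangle$ into that constant, of size $O(\varepsilon_m)$ and harmless. The Diophantine hypothesis (A1) and a classical R\"ussmann-type argument on the successively narrowing strips $D(s_m^{(j)},r_m^{(j)})$ yield bounds of the form $\|v_m\|\le C\kappa^{-1}(s_m-s_m^{(j)})^{-\tau-1}\|G\|$ and an analogous bound for $u_m$; combined with the weight $\|G\|\le C\varepsilon_m s_m^d$ from $(2)_m$ and the tuning $s_m^{(j)}=s_m-\tfrac{j}{100\ell}(s_m-s_{m+1})$, these collapse to exactly \eqref{y5.4}.

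Reversibility of $\Psi_m$ with respect to $G:(\xi,\eta,t)\mapsto(-\xi,\eta,-t)$ amounts to the commutation $\Psi_m\circ G=G\circ\Psi_m$, which is equivalent to $u_m$ being odd and $v_m$ being even in $(\xi,t)$. Since $F(\cdot,0,\cdot)$ is even and $G(\cdot,0,\cdot)$ is odd in $(\xi,t)$ by \eqref{y5.2}, the unique mean-normalized Fourier solutions automatically carry precisely those parities, and the constant added to $v_m$ is trivially even. Consequently $\Psi_m$ is $G$-equivariant and the transformed system $(a)^{(m)}_*$ inherits the reversibility required in $(3)_{m+1}$.

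Finally, to produce the new perturbation $(f^0_{m+1},g^0_{m+1})$, I substitute $\Psi_m$ into \eqref{y5.5} and use the homological equations to cancel the linear piece. What remains is a combination of Taylor remainders such as $F(\xi+u_m,\eta+v_m,t)-F(\xi,0,t)-\partial_y F(\xi,0,t)\cdot v_m-\partial_\xi F(\xi,0,t)\cdot u_m$, the analogous expression for $G$, the bilinear products $\tilde F\partial_\xi u_m$, $\tilde G\partial_\eta u_m$, and drift terms $\eta\,\partial_\xi u_m$. Each is quadratic in the small quantities, so Cauchy estimates on $D(s_{m+1},r_{m+1})\subset D(s_m,r_m)$ give bounds of the form $\|f^0_{m+1}\|\le C\varepsilon_m^2 s_m^{-\alpha}$ and $\|g^0_{m+1}\|\le C\varepsilon_m^2 s_m^{-\alpha}\cdot s_{m+1}^d$ for some $\alpha=\alpha(d,\tau)$. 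The principal obstacle is arithmetic rather than analytic: one has to verify that with the iteration constants fixed in Section~3 ($s_m=\varepsilon_m^{1/\ell}$, $r_m=s_m^{d+1+\mu/10}$, $\ell=2d+1+\mu$, $\tau=d+\mu/100$, $\widetilde\mu=\mu/(100(2\tau+1+\mu))$) the resulting loss exponent satisfies $\alpha/\ell<1-\widetilde\mu$, so that $\varepsilon_m^2 s_m^{-\alpha}\le\varepsilon_{m+1}=\varepsilon_m^{1+\widetilde\mu}$. This is the precise inequality that forces the smoothness threshold $\ell=2d+1+\mu$ appearing in the theorem.
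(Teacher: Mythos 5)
Your overall skeleton — homological equations, Fourier expansion with a Rüssmann estimate, reversibility via parity, Cauchy estimates on shrinking strips — matches the paper's Sections 5 and 6. But there is a genuine gap in how you set up the homological equations. You freeze $y=0$ and solve
$\partial_\omega v_m = G(\xi,0,t)$, $\partial_\omega u_m - v_m = F(\xi,0,t)$, so that $u_m, v_m$ depend only on $(\xi,t)$. The paper instead solves \eqref{eq6.15}--\eqref{eq6.16} with the full $y$-dependent right-hand sides $f_m^{(m)}(x,y,t)$, $g_m^{(m)}(x,y,t)$, treating $y$ as a parameter, so that $u,v$ depend on $(x,y,t)$ and the perturbation is annihilated identically in $y$. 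This is not a cosmetic choice. With $y$ frozen, after the substitution the equation for $\dot\xi$ retains the linear-in-$\eta$ residual $\partial_y F(\xi,0,t)\cdot\eta$, which you neither solve away nor list among the terms you subtract in your Taylor remainder (you subtract $\partial_y F(\xi,0,t)\cdot v_m$ but not $\partial_y F(\xi,0,t)\cdot\eta$). Its size on $D(s_{m+1},r_{m+1})$ is of order $\varepsilon_m\, r_{m+1}/r_m$, and it is not ``quadratic in the small quantities'' as you assert.

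With the Section~3 constants ($r_m=s_m^{d+1+\mu/10}$, $s_m=\varepsilon_m^{1/\ell}$, $\varepsilon_{m+1}=\varepsilon_m^{1+\widetilde\mu}$, $\ell=2d+1+\mu$) one has $r_{m+1}/r_m=\varepsilon_m^{\widetilde\mu(d+1+\mu/10)/\ell}$, so the bound $\varepsilon_m\, r_{m+1}/r_m\le C\varepsilon_{m+1}$ would require $(d+1+\mu/10)/\ell\ge 1$, i.e. $d+1+\mu/10\ge 2d+1+\mu$, which is false for every $d\ge 1$ and $0<\mu\ll 1$. So the residual does not close the induction $(2)_{m}\Rightarrow(2)_{m+1}$. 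In the paper, once $u,v$ carry the $y$-dependence, the only first-order residual is the drift $\partial_x u\cdot y$ of \eqref{eq6.6}, whose size $C\varepsilon_m s_m^{-(\tau+\mu/100+1)}r_m=s_m^{4\mu/50}\varepsilon_m$ is strictly below $\varepsilon_{m+1}$ precisely because of the tuning of $r_m$; the extra term $\partial_y u\cdot g_m^{(m)}$ appearing in \eqref{eq6.4} is genuinely quadratic and harmless. If you want to keep a $y=0$ homological equation you must do something with the residual $\partial_y F(\xi,0,t)\cdot\eta$ (absorb its mean into the frequency/twist, or renormalize $\eta$), none of which is mentioned in your proposal.

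Two minor points: (i) your sign $\partial_\omega v_m=G$ is the opposite of the paper's \eqref{eq6.16}, reflecting that your $(u_m,v_m)$ play the role of the paper's $(U,V)$ (the forward map $\Phi_m$) rather than $(u,v)$ (the inverse $\Psi_m$); (ii) the paper establishes the reversibility of the new system via the uniqueness of the solution of the homological equations (end of Section~6) rather than by inspecting Fourier parities, though the two arguments are equivalent.
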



{\bf Proof of Theorem 1.2} We see that
$$\Psi^{\infty}=\lim_{m\rightarrow \infty}\Psi_{1}\circ \cdots\circ\Psi_{m}: D(0, 0)\rightarrow D(s_{0}, r_{0})\subset D.$$
The proof for the existence of limit $\Psi^{\infty}$ is now standard. We omit the detail. See Moser \cite{Moser1962} for example.
Let  $\Phi^{\infty}=\lim_{m\rightarrow \infty} \Phi^{(m)}.$ Thus $\Psi^{\infty}=(\Phi^{\infty})^{-1}$ and $\Psi^{\infty}(\{\omega t\}\times \{y=0\}\times \{t: t\in \mathbb{T}\})$ is
a $C^{0}$ embedding torus of the original equations \eqref{a}.

{\bf Proof of Theorem 1.1}
By Proposition 4.5 of  Sevryuk \cite{Sevryuk1986}, we know that the map $A$ in Theorem 1.1 can be regarded as the time-1 map ( Poincare map ) of equation \eqref{a} in Theorem 1.2. The proof is completed by Theorem 1.2.

\section{Derivation of homological equation}
Let us recall \eqref{y5.5}. Let
\begin{eqnarray}
&&f_{m}^{(m)}(x, y, t)=f_{m}^{0}(x, y, t)+f_{m}(x, y, t),\label{18.1}\\
&&g_{m}^{(m)}(x, y, t)=g_{m}^{0}(x, y, t)+g_{m}(x, y, t).\label{18.2}
\end{eqnarray}
By the conditons $(1)_{m},$ $(2)_{m}$ and $(3)_{m}$ in the iterative lemma, and observing \eqref{92.1}, \eqref{92.2} and \eqref{92.3}, we have
\begin{itemize}
  \item [$(i)_{m}$] the functions $f_{m}^{(m)},$ $g_{m}^{(m)}$ are real for real arguments;
  \item [$(ii)_{m}$] the functions $f_{m}^{(m)},$ $g_{m}^{(m)}$ are analytic in $D(s_{m}, r_{m}):$
  \be\label{eq*}\parallel f_{m}^{(m)}\parallel_{s_{m}, r_{m}}\leq C \varepsilon_{m},\;\;\parallel g_{m}^{(m)}\parallel_{s_{m}, r_{m}}\leq C \varepsilon_{m}s_{m}^{d};\ee
  \item [$(iii)_{m}$] the functions $f_{m}^{(m)},$ $g_{m}^{(m)}$ are reversible with respect to $G: (x, y, t)\mapsto (-x, y, -t),$ that is
  \begin{eqnarray}
&&f_{m}^{(m)}(-x, y, -t)=f_{m}^{(m)}(x, y, t),\label{eq6.17}\\
&&g_{m}^{(m)}(-x, y, -t)=-g_{m}^{(m)}(x, y, t).\label{eq6.18}
\end{eqnarray}
\end{itemize}

Consider a map $\Phi=\Phi_{m}$ of the form
\be\label{eq6.1} \Phi=\Phi_{m}: \left\{
                                \begin{array}{ll}
                                  x=\xi+U(\xi, \eta, t)\\
                                  y=\eta+V(\xi, \eta, t)
                                \end{array}
                              \right.
\ee
and its inverse $\Phi^{-1}$ is of the form
\be\label{eq6.2}\Psi=\Phi^{-1}: \left\{
                           \begin{array}{ll}
                              \xi=x+u(x, y, t),\\
                              \eta=y+v(x,y,t),
                           \end{array}
                         \right.
\ee
where $u, v, U, V$ will be specified. Inserting \eqref{eq6.2} into equation $(a_{j})^{*}$ with $j=m-1$ (i.e. \eqref{y5.5}), and
noting \eqref{18.1} and \eqref{18.2}, we have that
\begin{eqnarray}
\dot{\xi}&=&\omega+\eta\\
&+& \omega\cdot \partial_{x}u+\partial_{t} u+f_{m}^{(m)}(x, y, t)-v\label{eq6.3}\\
&+& \partial_{y} u\cdot g_{m}^{(m)}(x, y, t)+\partial_{x}u\cdot f_{m}^{(m)}(x, y, t)\label{eq6.4}\\
&+& \partial_{x}u\cdot y,\label{eq6.6}
\end{eqnarray}
\begin{eqnarray}
\dot{\eta}&=&\omega\cdot\partial_{x}v+\partial_{t}v+g_{m}^{(m)}(x,y,t)\label{eq6.8}\\
&+& \partial_{y}v\cdot g_{m}^{(m)}(x, y, t)+\partial_{x}v\cdot f_{m}^{(m)}(x, y, t)\label{eq6.9}\\
&+& \partial_{x}v\cdot y \label{eq6.11}
\end{eqnarray}
and where $u=u(x, y, t),$ $v=v(x, y, t),$ $\omega\cdot\partial_{x}=\Sigma_{j=1}^{d}\omega_{j}\partial_{x_{j}},$ and
$x=\xi+U(\xi, \eta, t),$ $y=\eta+V(\xi, \eta, t)$ will be implicity defined by \eqref{eq6.2}.

Letting \eqref{eq6.3}=0 and \eqref{eq6.8}=0, we derive homological equations:
\begin{equation}\label{eq6.15}
\omega\cdot \partial_{x}u+\partial_{t}u-v+f_{m}^{(m)}(x, y, t)=0
\end{equation}
and
\begin{equation}\label{eq6.16}
\omega\cdot \partial_{x}v+\partial_{t}v+g_{m}^{(m)}(x, y, t)=0.
\end{equation}

Let $\widehat{g}_{m}^{(m)}(k, l, y)$ is the $(k, l)-$ Fourier coefficient of $g_{m}^{(m)}(x, y, t),$ with respect to
variable $(x, t)$, that is,
$$\widehat{g}_{m}^{(m)}(k, l, y)=\frac{1}{(2\pi)^{d+1}}\int_{0}^{2\pi}\cdots\int_{0}^{2\pi} g_{m}^{(m)}(x, y, t)e^{-\sqrt{-1}(\langle k, x\rangle+l t)}dx dt,$$
where $k\in \mathbb{Z}^{d}, l\in \mathbb{Z}.$ Similarly, we can define $\widehat{f}_{m}^{(m)}(k, l, y),$ etc.

By \eqref{eq6.18}, we have
\be\label{eq6.19} \widehat{g}_{m}^{(m)}(0, 0, y)=0,\;\;y\in {B}_{\mathbb{C}}(r_{m}).\ee
By passing to Fourier coefficients, homological equation \eqref{eq6.16} reads
\be\label{eq6.20} \sqrt{-1}(\langle k, \omega\rangle+l)\,\widehat{v}(k, l, y)=-\widehat{g}^{(m)}_{m}(k, l, y),\ee
where $(k, l)\in \mathbb{Z}^{d}\times \mathbb{Z}\setminus \{(0, 0)\},$ $y\in B_{\mathbb{C}}(r_{m}).$ So we have
\be\label{eq6.21}\widehat{v}(k,l, y)=\sqrt{-1}\frac{\widehat{g}_{m}^{(m)}(k, l, y)}{\langle k, \omega\rangle+l}.\ee
We notice that when $(k, l)=(0, 0),$ $\langle k, \omega\rangle+l=0$ and $\widehat{g}_{m}^{(m)}(0, 0, y)=0.$ Thus we
have a freedom to choose $\widehat{v}(0, 0, y)$ in \eqref{eq6.20}.
\begin{lemma}\label{ly5.1} \cite{Russmann1975, Russmann1983} Assume $\omega$ satisfies
$$|\langle k, \omega\rangle+j|\geq \kappa/|k|^{\tau},\;\;\forall (k, j)\in\mathbb{Z}^{d}\times\mathbb{Z}\setminus\{(0,0)\}.$$
Then the inequalities

$$\sum_{|k|\leq n}|\langle k, \omega\rangle+j|^{-2}\leq C\kappa^{-2}n^{2\tau},\;\;|k|=|k_{1}|+\cdots+|k_{d}|$$
hold for $n=1, 2, \cdots.$
\end{lemma}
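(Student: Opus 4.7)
The plan is to use Rüssmann's dyadic-separation technique, since the naive term-by-term bound via the Diophantine inequality gives only $C\kappa^{-2}n^{2\tau+d}$, off by a factor of $n^d$ from the target.

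First I would decompose the index set dyadically by the size of the small divisor, setting
\[E_\ell=\bigl\{k\in\mathbb{Z}^d:0<|k|\leq n,\;2^{-\ell-1}<|\langle k,\omega\rangle+j|\leq 2^{-\ell}\bigr\},\qquad \ell\geq 0,\]
and observing that the Diophantine hypothesis forces $|\langle k,\omega\rangle+j|\geq\kappa n^{-\tau}$ on $\{0<|k|\leq n\}$, so only the range $\ell\leq\ell_{\max}:=\tau\log_2 n-\log_2\kappa+O(1)$ contributes.

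The heart of the proof is a separation estimate: if $k\neq k'$ both lie in $E_\ell$, the triangle inequality gives $|\langle k-k',\omega\rangle|\leq 2^{1-\ell}$, while the Diophantine condition applied to the nonzero pair $(k-k',0)$ forces $|\langle k-k',\omega\rangle|\geq\kappa/|k-k'|^\tau$; hence $|k-k'|\geq (\kappa\cdot 2^{\ell-1})^{1/\tau}$. Thus $E_\ell$ is a $c\,\kappa^{1/\tau}2^{\ell/\tau}$-separated subset of $\{|k|\leq n\}$, and a routine volume count yields $|E_\ell|\leq C\kappa^{-d/\tau}n^d\,2^{-\ell d/\tau}$. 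Substituting into
\[\sum_{0<|k|\leq n}|\langle k,\omega\rangle+j|^{-2}\leq\sum_{\ell=0}^{\ell_{\max}}4^{\ell+1}|E_\ell|\leq C\kappa^{-d/\tau}n^d\sum_{\ell=0}^{\ell_{\max}}2^{\ell(2-d/\tau)}\]
and using $\tau>d$ (so $2-d/\tau>0$ and the geometric series is dominated by its last term, $(n^\tau/\kappa)^{2-d/\tau}$), one obtains $C\kappa^{-2}n^{2\tau}$; the $k=0$ contribution $|j|^{-2}\leq 1$ is absorbed into the constant.

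The principal subtlety is the separation lemma itself, which rests on the legality of applying the Diophantine inequality to $(k-k',0)\neq(0,0)$—immediate from $k\neq k'$—and on the geometric series being dominated by its tail rather than its head, which requires $\tau>d/2$ and is automatic under the paper's standing hypothesis $\tau=d+\mu/100$. In the opposite regime ($\tau\leq d/2$) the sum would be controlled by its head and a different exponent in $n$ would appear, so the crucial quantitative input is genuinely the condition $\tau>d$.
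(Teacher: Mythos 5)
Your dyadic-separation argument is correct and takes a genuinely different route from the paper's. The paper follows R\"ussmann's rearrangement argument: enumerate the numbers $\{\langle k,\omega\rangle+l : |k|\le n,\ l\in\mathbb{Z}\}$ in increasing order $\cdots<d_{-1}<0<d_1<d_2<\cdots$, observe $d_1\ge \kappa/n^\tau$ and that consecutive gaps satisfy $d_{j+1}-d_j=|\langle k_{j+1}-k_j,\omega\rangle+(l_j-l_{j+1})|\ge \kappa/(2n)^\tau$ by applying the Diophantine condition to the difference pair, deduce $d_j\ge j\kappa(2n)^{-\tau}$, and conclude $\sum_j d_j^{-2}\le C\kappa^{-2}n^{2\tau}$ via $\sum j^{-2}<\infty$. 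That argument is shorter and, crucially, automatically covers the sum over $l\in\mathbb{Z}$ as well as $k$, which is how the lemma is invoked in Section~5 to bound $\sum_{1\le |k|+|l|\le n}|\langle k,\omega\rangle+l|^{-2}$. Your version pigeonholes by the magnitude of the divisor, derives a separation bound from the Diophantine condition applied to $k-k'$, and counts by volume; it is the standard dyadic reformulation of the same underlying mechanism and makes the bookkeeping of powers of $n$ more transparent, at the cost of being longer.

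Two points to tighten. First, as written you hold $j$ fixed and apply the Diophantine condition to $(k-k',0)$; this proves only the fixed-$j$ version, whereas the paper's proof and its downstream use require the joint sum over $(k,l)$. Your argument extends with essentially no change: take $j(k)$ to be the nearest integer to $-\langle k,\omega\rangle$ (the only $l$ contributing a large term for each fixed $k$), apply the Diophantine condition to the pair $(k-k',\,j(k)-j(k'))$ in the separation step, and absorb the remaining $l$'s (which contribute $O(n^d)$) into the constant, using $\tau>d/2$. Second, in your final paragraph you correctly identify $\tau>d/2$ as the threshold for the geometric series to be tail-dominated, but then state that the ``crucial quantitative input is genuinely the condition $\tau>d$''; for this particular lemma the genuine requirement is only $\tau>d/2$, and $\tau>d$ is the paper's standing hypothesis for other reasons.
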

\begin{proof}
The proof for $d=1$ is given in \cite{Russmann1975,Russmann1983}. For $d>1,$ the proof goes well. For the convenience of the readers, we copy
the proof from \cite{Russmann1975,Russmann1983} with a minor modification.
If we numerate the numbers of the set
$$\{\<k, \omega\>+l\mid |k|\leq n, l\in \mathbb{Z}\},$$
according to their natural order
\begin{eqnarray*}
&&\cdots < d_{-2}< d_{-1}< 0 < d_{1}< d_{2}<\cdots,\\
&& d_{j}=\<k_{j}, \omega\>- l_{j}, j=\pm 1, \pm2, \cdots.
\end{eqnarray*}
According to $\omega\in D(\kappa, \tau),$ $d_{1}\geq \kappa/ n^{\tau}$, we obtain
$$d_{j+1}-d_{j}=|\<k_{j+1}-k_{j}, \omega\>+ (l_{j}-l_{j+1})|\geq \kappa/2^{\tau}n^{\tau},\;\;j=1, 2, \cdots.$$
Thus $$d_{j}\geq j\kappa (2n)^{-\tau},\;\;j=1, 2, \cdots.$$
It follows that
$$\sum_{j=1}^{\infty}d_{j}^{-2}\leq \kappa^{-2}(2n)^{2\tau}\sum_{j=1}^{\infty}\frac{1}{j^{2}}\leq C\kappa^{-2}n^{2\tau}.$$
In the same way, we have
$$\sum_{j=-\infty}^{-1}d_{j}^{-2}\leq C\kappa^{-2}n^{2\tau}.$$
Consequently, we have
$$\sum_{|k|\leq n}|\langle k, \omega\rangle+l|^{-2}\leq C \kappa^{-2}n^{2\tau}.$$\end{proof}
We are now in position to estimate $v(x, y ,t)$. First, by Parseval's identity
\be\label{li-1}\sup_{y\in B_{\mathbb{C}}(r_{m})}\sum_{(k, l)\in \mathbb{Z}^{d+1}\setminus \{0\}}\mid \widehat{g}_{m}^{(m)}(k, l, y)\mid^{2}e^{2(|k|+|l|)s_{m}}
\leq C\parallel g_{m}^{(m)}\parallel^{2}_{s_{m}, r_{m}},\ee
where $C=C(\kappa,\tau)$ depends on $\kappa$ and $ \tau.$
Following \cite{Nash1956,Piao2008}, let
$$G_{n}(y)=\sum_{\begin{array}{c}
                     (k, l)\in \mathbb{Z}^{d+1} \\
                     1\leq |k|+|l|\leq n \\
                   \end{array}}\mid\widehat{g}^{(m)}_{m}(k, l, y)\mid\frac{e^{(|k|+|l|)s_{m}}}{|\langle k, \omega\rangle+l|},\;\;(n=1, 2, \cdots).$$
Then by Cauchy-Schwarz inequality and Lemma \ref{ly5.1}, we get
\begin{eqnarray*}
G_{n}(y)&\leq & C\sqrt{\sum_{(k,l)\in\mathbb{Z}^{d+1}}\mid\widehat{g}_{m}^{(m)}(k, l,y)\mid^{2}e^{2(|k|+|l|)s_{m}}}\sqrt{\sum_{1\leq |k|+|l|\leq n}
\frac{1}{\mid \langle k, \omega\rangle+l\mid^{2}}}\\
&\leq & C\parallel g_{m}^{(m)}\parallel_{s_{m}, r_{m}}\frac{|n|^{\tau}}{\kappa}.
\end{eqnarray*}
Letting $\tilde b_m=\frac{1}{200\ell}(s_m-s_{m+1})\ge \frac{1}{400\ell} s_m$ and letting $G_{0}(y)=0,$ we obtain by means of Abel's partial summation, for any $N\gg 1,$
\begin{eqnarray*}
&&\sum_{0<|k|+|l|\leq N}\mid \widehat{g}_{m}^{(m)}(k, l, y)\mid\frac{e^{(|k|+|l|)(s_{m}-\tilde b_m)}}{\mid \langle k, \omega\rangle+l\mid}\\
&=&(1-e^{-\tilde b_m})\sum_{n=1}^{N}G_{n}(y)e^{-n \tilde b_m}+G_{N}(y)e^{-(N+1) \tilde b_m}\\
&\leq &C\parallel g_{m}^{(m)}\parallel_{s_{m}, r_{m}}\sum_{n=1}^{\infty}n^{\tau}(e^{-n\tilde b_m}-e^{-(n+1)\tilde b_m})\\
&\leq & C\parallel g_{m}^{(m)}\parallel_{s_{m}, r_{m}}(\tilde b_m)^{-\tau}\\ & \leq &(400\ell)^\tau C \parallel g_{m}^{(m)}\parallel_{s_{m}, r_{m}} s_m^{-\tau}.
\end{eqnarray*}
It follows \begin{eqnarray}\label{6.?}
\parallel v\parallel_{s _{m}^{(1)}, r_{m}}&\leq &C\parallel g_{m}^{(m)}\parallel_{s_{m},r_{m}}s_{m}^{-\tau}\nonumber\\
&\leq & C\varepsilon_{m}s_{m}^{-\tau+d}=C\varepsilon_{m}s_{m}^{-\frac{\mu}{100}}\nonumber\\
&\ll& r_{m}.
\end{eqnarray}
Recall that we have a freedom to choose $\widehat{v}(0,0,y)$ such that
\be \label{li.3} -\widehat{v}(0,0,y)+\widehat{f}_{m}^{(m)}(0,0,y)=0,\;\;\forall y\in B_{\mathbb{C}}(r_{m}).\ee
In view of \eqref{eq*} and \eqref{li.3}, by applying the same method to \eqref{eq6.15}, we have
\begin{eqnarray} \label{li.4}
\parallel u\parallel_{s_{m}^{(2)}, r_{m}}&\leq &
 C\parallel v\parallel_{s_{m}^{(1)},r_{m}}s_{m}^{-\tau}+C\parallel f_{m}^{(m)}\parallel_{s_{m},r_{m}}s_{m}^{-\tau}\nonumber\\
 &\leq &C\varepsilon_{m}s_{m}^{-(\tau+\frac{\mu}{100})}.\end{eqnarray}
 By Cauchy estimate, we have that for $0\leq p,$ $0\leq q$ and $p+q=1,$
 \begin{eqnarray*}
 &&\parallel \partial_{x}^{p}\partial_{y}^{q}v\parallel_{s_{m}^{(2)}, r_{m}^{(1)}}\leq C\varepsilon_{m}s_{m}^{-\frac{\mu}{100}}\max\{s_{m}^{-1}, r_{m}^{-1}\}
 \leq C s_{m}^{l-\frac{\mu}{100}-(d+1+\frac{\mu}{100})}
  \leq Cs_{m}^{d-\frac{\mu}{100}}\ll 1,\\
 &&\parallel \partial_{x}^{p}\partial_{y}^{q} u\parallel_{s^{(3)}_{m},r^{(1)}_{m}}\leq C \varepsilon_{m}s_{m}^{-(\tau+\frac{\mu}{100})}\max\{s_{m}^{-1}, r_{m}^{-1}\}
 \leq Cs_{m}^{l-\tau-\frac{\mu}{100}-(d+1+\frac{\mu}{100})}\leq Cs_{m}^{\frac{\mu}{100}}\ll 1.
 \end{eqnarray*}
 By \eqref{eq6.2}, \eqref{6.?} and \eqref{li.4} and by means of implicit theorem, we have that $\Phi=\Psi^{-1}=\Phi_{m}:$
 \begin{eqnarray}\label{eq6.30}
 \Phi:\left\{
        \begin{array}{ll}
           x=\xi+U(\xi, \eta, t),\\
           y=\eta+V(\xi, \eta, t),
        \end{array}
      \right.
 \end{eqnarray}
where $(\xi, \eta, t)\in D(s_{m}^{(4)}, r_{m}^{(2)})$ and $U, V$ are real analytic in $D(s_{m}^{(4)}, r^{(2)}_{m}),$ and satisfy
\begin{eqnarray}
&& \parallel U\parallel_{s_{m}^{(4)},r_{m}^{(2)}}\leq C\varepsilon_{m}s_{m}^{-2\tau},\label{eq6.31}\\
&& \parallel V\parallel_{s_{m}^{(4)},r_{m}^{(2)}}\leq C\varepsilon_{m}s_{m}^{-\tau},\label{eq6.32}\\
&&\Psi(D(s_{m+1}, r_{m+1})) \subset\Psi(D(s_{m}^{(4)}, r_{m}^{(2)}))\subset D(s_{m}, r_{m})\label{eq6.33}
\end{eqnarray}
Now let us consider the reversibility of the changed system. First of all, by applying \eqref{eq6.17}, \eqref{eq6.18} to \eqref{eq6.15} and \eqref{eq6.16}, we have
\be\label{eq6.34}u(-x, y,-t)=-u(x, y, t),\;\;v(-x, y, -t)=v(x,y, t).\ee
Then by $\Phi \circ \Psi=id,$ we get
\begin{eqnarray}
&& u(x,y, t)+U(x+u(x, y, t), y+V(x, y, t), t)=0,\label{eq6.35}\\
&& v(x,y, t)+V(x+u(x, y, t), y+V(x, y, t), t)=0.\label{eq6.36}
\end{eqnarray}
By \eqref{eq6.34}, \eqref{eq6.35} and \eqref{eq6.36}, we have
\be\label{eq6.37}U(-\xi, \eta, -t)=-U(\xi, \eta, t),\;\;V(-\xi, \eta, -t)=V(\xi, \eta, t),\ee
where $(\xi, \eta, t)\in D(s_{m}^{(4)},r_{m}^{(2)}).$ It follows that the changed system are still reversible with respect to
$G: (x, y, t)\mapsto (-x, y, -t).$

\section{Estimates of new perturbations}
\begin{itemize}
  \item Estimate of \eqref{eq6.4}. By \eqref{6.?}, \eqref{li.3}, \eqref{li.4} and \eqref{eq*}, and regarding \eqref{eq6.4} as a function of $(x, y, t),$ we have
$$\parallel \eqref{eq6.4}(x,y,t)\parallel_{s_{m}^{(5)}, r_{m}^{(3)}}\leq \varepsilon_{m}^{\tilde{\mu}}\varepsilon_{m}=\varepsilon_{m+1}.$$
  \item Estimate of \eqref{eq6.6}. By Cauchy estimate, and in view of \eqref{li.4}, we have
\begin{eqnarray}
\parallel \eqref{eq6.6}(x, y, t)\parallel_{s_{m}^{(3)}, r_{m}^{(2)}}&\leq &\varepsilon_{m}s_{m}^{-(\tau+\frac{\mu}{100}+1)}r_{m}\\
&=& s_{m}^{\frac{4}{50}\mu}\varepsilon_{m}<\varepsilon_{m+1}.
\end{eqnarray}
\end{itemize}
Let
$$f_{m+1}^{0}=\eqref{eq6.4}+\eqref{eq6.6}.$$
Moreover, by \eqref{eq6.31} and \eqref{eq6.32}, we have
\be\label{eq6.50} \parallel f_{m+1}^{0}(\xi, \eta, t)\parallel_{s_{m+1}, r_{m+1}}\leq \varepsilon_{m+1}.\ee
Let $g_{m+1}^{0}=\eqref{eq6.9}+\eqref{eq6.11}.$ Similarly, we can prove
$$\parallel g_{m}^{0}(\xi, \eta, t)\parallel_{s_{m+1}, r_{m+1}}\leq \varepsilon_{m+1}s_{m+1}^{d}.$$
Thus we have proved the claim $(2)_{m}$ in the iterative Lemma with replacing $m$ by $m+1.$

Now we are in position to prove $(3)_{m}$ with replacing $m$ by $m+1.$ Let us return to the homological equation \eqref{eq6.15} and
\eqref{eq6.16}. Replacing $(x, y, t)$ by $(-x, y, -t),$ and in view of
$$f_{m}^{(m)}(-x, y, -t)=f_{m}^{(m)}(x,y ,t),\;\;g_{m}^{(m)}(-x, y, -t)=-g_{m}^{(m)}(x,y ,t).$$
We see that $u(-x, y, -t), -v(-x, y, -t)$ are still a pair of the solutions of \eqref{eq6.15} and \eqref{eq6.16}. Noting that the solutions of
\eqref{eq6.15} and \eqref{eq6.16} are unique.
So
$$u(-x, y, -t)=u(x, y, t), v(-x, y, -t)=-v(x, y, t).$$
This implies that \eqref{eq4.11} is reversible with respect to $G: (x, y, t)\mapsto (-x, y, -t).$
Thus $(3)_{m}$ holds true with replacing $m$ by $m+1.$

Again returning to \eqref{eq6.15} and \eqref{eq6.16}. Note that $f_{m}^{(m)}$ and $g_{m}^{(m)}$ are real for real arguments. It follows that
so are $u$ and $v$. Moreover, $f_{m+1}^{0}$ and $g_{m+1}^{0}$ are real for real arguments. We omit the detail here.
This completes the proof of the iterative Lemma.

\ \
\section{Application: Lagrange stability for a class of Li\'{e}nard equation}
Let $C>0$ be a universal constant which maybe different in different places. Consider the Li\'{e}nard equation
\begin{equation}\label{1}
\ddot{x}+x^{2n+1}+g(x, t)+f(x, t)\dot{x}=0,
\end{equation}
where $f$ and $g$ satisfy
\begin{itemize}
  \item [$(f)_{1}$] $f(x, t)$ is odd in $x,$ even in $t,$ and of period 1 in $t$,
  \item [$(f)_{2}$] $\exists \;0\leq p\leq n-1$, and $\exists$ integer $N>0$ such that
                    $$  |x^{k}\partial_{x}^{k}\partial_{t}^{\ell}f(x, t)|\leq C|x|^{p},\;\;|x|\gg 1,\;\;0\leq k\leq N,\;\;0\leq\ell\leq 2,$$
  \item [$(g)_{1}$] $g(x, t)$ is odd in $x,$ even in $t,$ and of period 1 in $t$,
  \item [$(g)_{2}$] $\exists\; q$ with $0\leq q\leq 2n-1$ such that
   $$|x^{k}\partial_{x}^{k}\partial_{t}^{\ell}g(x, t)|\leq C|x|^{q},\;\;|x|\gg 1, \;\;0\leq k\leq N,\;\;0\leq \ell \leq 2. $$
\end{itemize}

\ \
{Note that Eq. \eqref{1} is equivalent to the plane system
\begin{equation}\label{6}
\dot{x}=y,\;\;\dot{y}=-x^{2n+1}-g(x, t)-f(x, t)y.
\end{equation}
First of all, we consider a special Hamiltonian system
\begin{equation}\label{7}
\dot{x}=y, \;\;\dot{y}=-x^{2n+1}
\end{equation}
with Hamiltonian
\begin{equation}\label{8}
h(x,y)=\frac{y^{2}}{2}+\frac{x^{2(n+1)}}{2(n+1)}.
\end{equation}
Suppose that $(x_{0}(t), y_{0}(t))$ is the solution of Eq. \eqref{7}, with the initial conditions
$(x_{0}(0), y_{0}(0))=(0, 1).$ Clearly it is periodic. Let $T_{0}$ be its minimal positive period. It follows from \eqref{7} that
$x_{0}(t)$ and $y_{0}(t)$ possess the following properties:
\begin{itemize}
  \item [(a)] $x_{0}(t+T_{0})=x_{0}(t)$ and $y_{0}(t+T_{0})=y_{0}(t)$;
  \item [(b)] $x_{0}'(t)=y_{0}(t)$ and $y_{0}'(t)=-(x_{0}(t))^{2n+1}$;
  \item [(c)] $(n+1)((y_{0}(t))^{2}+(x_{0}(t))^{2n+2}=n+1$;
 \item [(d)]$x_{0}(-t)=-x_{0}(t)$ and $y_{0}(-t)=y_{0}(t).$
\end{itemize}
Following \cite{Dieckerhoff-Zehnder1987,Liu1998} we construct transformation $\psi: \mathbb{R}^{+}\times \mathbb{T} \rightarrow \mathbb{R}^{2}/\{0\},$
where $(x, y)=\psi(\lambda, \theta)$ with $\lambda>0$ and $\theta$ (mod 1) being given by the formula
\begin{equation}\label{9}
\psi: x=c^{\alpha}\rho^{\alpha}x_{0} (\frac{\theta T_{0}}{2\pi}),\;\;y=c^{\beta}\rho^{\beta}y_{0} (\frac{\theta T_{0}}{2\pi}),
\end{equation}
where $\alpha=\frac{1}{n+2},$ $\beta=1-\alpha$ and $c=\frac{2\pi}{\beta T_{0}}.$
By using the transformation $\psi,$ and noting properties (b) and (d), Eq. \eqref{6} is transformed into}
\begin{equation}\label{12}
\left\{
  \begin{array}{ll}
     \dot{\rho}=\frac{-1}{2\pi}(c\rho T_{0}y_{0}^{2}f(c^{\alpha}\rho^{\alpha}x_{0}, t)+c^{\alpha}\rho^{\alpha}y_{0}T_{0}g(c^{\alpha}\rho^{\alpha}x_{0}, t))\triangleq F_{1}( \theta, \rho, t),\\
     \dot{\theta}=c_{0}\rho^{2\beta-1}+c\alpha x_{0}y_{0}f(c^{\alpha}\rho^{\alpha}x_{0}, t)+c^{\alpha}{\alpha}\rho^{\alpha-1}x_{0}g(c^{\alpha}\rho^{\alpha}x_{0}, t)\triangleq c_{0}\rho^{2\beta-1}+F_{2}(\theta, \rho, t),
  \end{array}
\right.
\end{equation}
where $c_{0}=\beta c^{2\beta},$ $x_{0}=x_{0}(\frac{\theta T_{0}}{2\pi}),$ $y_{0}=y_{0}(\frac{\theta T_{0}}{2\pi}),$ ${2\beta-1=\frac{n}{n+2}}.$
\begin{dfn}
Given $\rho_{*}\gg 1.$ Consider $\rho\geq \rho_{*},$ $\theta\in \mathbb{T}.$ For  $\gamma\in\mathbb R,$ $q\geq 0,$ we call $y=y(\theta, \rho, t)\in P_{q,p}(\gamma)$
if
 $$\sup_{(\theta, \rho, t)\in \mathbb{T}\times [\rho_{*}, +\infty]\times
 \mathbb{T}}|\rho^{q-\gamma}\!\!\sum_{
  k+\ell\leq q, \;
k, \ell\geq 0
 }\partial_{\theta}^{k}\partial_{\rho}^{\ell}\partial_{t}^{p}y(\theta, \rho, t)|\leq C<\infty.$$
In light of $(f)_{1}$, $(f)_{2}$ and $(g)_{1}$, $(g)_{2},$ we have
\begin{eqnarray}
&& F_{1}\in P_{N, 2}(\frac{2n+1}{n+2}), F_{1}(-\theta, \rho, t)=-F_{1}(\theta, \rho, t), F_{1}(\theta, \rho, -t)=F_{1}(\theta, \rho, t),\\
&&\forall (\theta, \rho, t)\in \mathbb{T}\times [\rho_{*}, +\infty)\times \mathbb{T},\nonumber\\
&& F_{2}\in P_{N, 2}(\frac{n-1}{n+2}),F_{2}(-\theta, \rho, t)=F_{2}(\theta, \rho, t), F_{2}(\theta, \rho, -t)=F_{2}(\theta, \rho, t),\\
&&\forall (\theta, \rho, t)\in \mathbb{T}\times [\rho_{*}, +\infty)\times \mathbb{T}.\nonumber
\end{eqnarray}
For $\forall C>0,$ we define the domain
$$D_{C}=\{(\theta, \lambda, t)\mid \theta\in\mathbb{T}, t\in \mathbb{T}, \lambda\geq C\}.$$
\end{dfn}
\begin{lemma}\label{lem100}
There exists a diffeomorphism depending periodically on $t,$
$$\Psi: \rho=\mu+U(\phi, \mu, t),\;\;\theta=\phi+V(\phi, \mu, t)$$
such that
$$D_{c_{+}}\subset \Psi(D_{c_{0}})\subset D_{c_{-}}\;\;\mbox{for}\;\;1\ll c_{+}<c_{0}<c_{-},$$
and \eqref{12} is changed by $\Psi$ into
\begin{equation*}\label{*}
\left\{
  \begin{array}{ll}
     \dot{\mu}=\widetilde{F}_{1}(\phi, \mu, t),\\
     \dot{\phi}=c_{0}\mu^{2\beta-1}+h(\mu, t)+\widetilde{F}_2(\phi, \mu, t),
  \end{array}
\right.
\end{equation*}
and
\begin{eqnarray}
&& \widetilde{F}_{1}\in P_{5,0}\left(\frac{-1}{n+2}\right),\;\;\widetilde{F}_{1}(-\phi, \mu, -t)=-\widetilde{F}_{1}(\phi, \mu, t),\label{30}\\
&& \widetilde{F}_{2}\in P_{5,0}\left(\frac{-1}{n+2}\right),\;\;\widetilde{F}_{2}(-\phi, \mu, -t)={\widetilde{F}_{2}(\phi, \mu, t)},\label{31}\\
&& h\in P_{5,0}\left(\frac{n-1}{n+2}\right),\;\;\mbox{which is independant of}\;\;\phi\; \mbox{and}\;h(\mu, -t)=-h(\mu, t).
\end{eqnarray}
\end{lemma}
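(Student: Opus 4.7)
The plan is to construct $\Psi$ as a composition of finitely many near-identity changes of variables $\Psi=\Psi_K\circ\cdots\circ\Psi_1$, each gaining a factor $\mu^{-1/(n+2)}$ in the $P$-order of the $\phi$-dependent part of the perturbation. Starting from the exponents $\gamma_1=(2n+1)/(n+2)$ (for $F_1$) and $\gamma_2=(n-1)/(n+2)$ (for $F_2$) in \eqref{12}, after $K=2n+2$ steps both are reduced to the target exponent $-1/(n+2)$. Each $\Psi_{j+1}$ is defined by solving homological equations whose coefficient is the \emph{large} quantity $c_0\mu^{2\beta-1}$, so that no small divisors occur in the regime $\mu\ge\rho_\ast\gg 1$.

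At the $j$-th step the system takes the form
\begin{equation*}
\dot\rho=F_1^{(j)}(\theta,\rho,t),\qquad \dot\theta=c_0\rho^{2\beta-1}+h_j(\rho,t)+F_2^{(j)}(\theta,\rho,t),
\end{equation*}
with $F_1^{(j)}$ odd under $G$ and $\theta$-mean zero, and $F_2^{(j)}$ even under $G$ and $\theta$-mean zero (the mean having been absorbed into $h_j$). Writing $\Psi_{j+1}:\rho=\mu+U(\phi,\mu,t),\ \theta=\phi+V(\phi,\mu,t)$ and keeping only the leading linear terms, one is led to the homological equations
\begin{align*}
c_0\mu^{2\beta-1}\partial_\phi U+\partial_t U &= F_1^{(j)}(\phi,\mu,t),\\
c_0\mu^{2\beta-1}\partial_\phi V+\partial_t V &= F_2^{(j)}(\phi,\mu,t).
\end{align*}
Expanding in Fourier series in $(\phi,t)$, one obtains $\hat U_{k,\ell}(\mu)=\hat F^{(j)}_{1,k,\ell}(\mu)/[i(kc_0\mu^{2\beta-1}+2\pi\ell)]$ for $(k,\ell)\ne(0,0)$, and analogously for $V$; for $\mu\gg 1$ the denominator is bounded below by $c|k|\mu^{2\beta-1}$ (resp.\ $c|\ell|$ when $k=0$), so that $U\in P(\gamma_1-(2\beta-1))=P(\beta)$ and $V\in P(\gamma_2-(2\beta-1))=P(-\alpha)$. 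Consequently $|\partial_\mu U|=O(\mu^{-\alpha})$ and $|\partial_\phi V|=O(\mu^{-\alpha})$, which lets the implicit function theorem provide the inverse $\Psi_{j+1}^{-1}$ and the domain inclusions $D_{c_+}\subset \Psi(D_{c_0})\subset D_{c_-}$ claimed in the lemma.

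After substitution, a direct order-count of the residual terms — namely the Taylor remainders $\partial_\rho F_i^{(j)}\cdot U$, the mixed products $\partial_\phi U\cdot F_2^{(j)}$ and $\partial_\phi V\cdot F_2^{(j)}$, and the quadratic correction $c_0(2\beta-1)\mu^{2\beta-2}U\,\partial_\phi U$ — shows that each lies in $P(\gamma_i-1/(n+2))$, which is the advertised gain. The $\phi$-mean of the $\dot\phi$-equation is \emph{not} annihilated but accumulated into $h_{j+1}(\mu,t)=h_j(\mu,t)+\langle R_2\rangle_\phi$, which after $K$ steps produces the normal-form term $h(\mu,t)$ of the lemma. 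The symmetries propagate automatically: since $F_1^{(j)}$ is odd and $F_2^{(j)}$ is even under $G$, the unique solutions $U,V$ of the homological equations (with the natural normalization) are respectively even and odd under $G$, so each $\Psi_{j+1}$, and hence $\Psi$, commutes with $G$; the stated parities of $\tilde F_1,\tilde F_2,h$ are then inherited from those of $f,g$.

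The main obstacle I expect is the careful bookkeeping of smoothness losses through the $K=2n+2$ iterations: each homological solution costs a bounded number of derivatives because of the factor $\mu^{-(2\beta-1)}$ and its $\mu$-derivatives, and one must check that the integer $N$ in hypotheses $(f)_2,(g)_2$ is taken large enough to leave $5$ derivatives at the end (which is what the statement $\tilde F_i,h\in P_{5,0}(\cdot)$ requires). A secondary subtlety is tracking the $t$-parity of $h$ through the iteration, which must be traced back to the $t$-parities of $f,g$ and to the way $\theta$-averages interact with the involution $G$. Everything else — solvability of the linear problems, invertibility of each $\Psi_{j+1}$, and the uniform estimates — amounts to a systematic but essentially routine computation.
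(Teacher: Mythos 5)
Your overall strategy (a finite chain of near-identity transformations driven by the large twist term $c_0\mu^{2\beta-1}$, with the $\phi$-averages of the $\dot\phi$-equation pushed into $h$ and the parities propagated) is the right kind of argument; note the paper itself gives no details and simply refers to Propositions 3.2 and 3.3 of Liu--Zanolin. However, the central analytic step of your scheme fails as written. You solve $c_0\mu^{2\beta-1}\partial_\phi U+\partial_t U=F_1^{(j)}$ by Fourier division with denominators $i\bigl(k\,c_0\mu^{2\beta-1}+2\pi\ell\bigr)$ and assert they are bounded below by $c|k|\mu^{2\beta-1}$. This is false: for every $k\neq 0$ the quantity $k\,c_0\mu^{2\beta-1}$ sweeps a continuum as $\mu$ ranges over $[\rho_*,\infty)$, so for suitable $\ell\in\mathbb Z$ the denominator vanishes (and is arbitrarily small nearby). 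There is no Diophantine condition available here, and none can be imposed, because the lemma requires a diffeomorphism on the whole domain $D_{c_0}$ -- resonant values of $\mu$ cannot be excised. In addition, the nonresonant modes $k=0$, $\ell\neq0$ produce no gain in $\mu$ at all: for them $\hat U_{0,\ell}=O(\mu^{\gamma_1})$, so your claims $U\in P(\beta)$, $V\in P(-\alpha)$ do not follow from your own formula, and since $\gamma_1=(2n+1)/(n+2)>1$ for $n>1$ this part of $U$ would even exceed $\mu$, destroying the near-identity character on which your invertibility and the inclusions $D_{c_+}\subset\Psi(D_{c_0})\subset D_{c_-}$ rely.

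The repair -- and the route of the propositions the paper cites -- is to keep $\partial_t$ out of the homological equation: solve $c_0\mu^{2\beta-1}\partial_\phi U=F_1^{(j)}-[F_1^{(j)}]_\phi$ and $c_0\mu^{2\beta-1}\partial_\phi V=F_2^{(j)}-[F_2^{(j)}]_\phi$ with $t$ as a parameter and zero $\phi$-mean normalization; there are then no divisors whatsoever, each solution gains the full factor $\mu^{-(2\beta-1)}$, and the new terms $\partial_t U,\partial_t V$ simply join the next perturbation, smaller by that factor. The $\phi$-averages must then be treated separately: $[F_2^{(j)}]_\phi$ is absorbed into $h_{j+1}$ (its oddness in $t$ follows from $G$-reversibility), while for the $\dot\rho$-equation one uses the parities: at the first step $[F_1]_\phi\equiv 0$ identically, because $F_1(-\theta,\rho,t)=-F_1(\theta,\rho,t)$ (oddness in $\theta$ alone, coming from $(f)_1$, $(g)_1$ and property (d) of $x_0,y_0$), and at later steps the surviving averages are odd in $t$ by reversibility and already of low enough order to be handled directly. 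Your parity argument for $U$ even and $V$ odd under $G$ does go through verbatim for the $\phi$-only homological equations, so the reversibility bookkeeping is fine; but without replacing the resonant $(\phi,t)$-Fourier inversion by this angle-only averaging, the proof does not stand.
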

\begin{proof}
The proof is similar to that in Propositions 3.2 and 3.3 in \cite{Liu1998}.
\end{proof}
Let $\lambda=c_{0}\mu^{2\beta-1}.$ Then \eqref{*} reads
\begin{equation}\label{**}
\left\{
  \begin{array}{ll}
    \dot{\lambda}=F_{1}^{*}(\phi, \lambda, t),\\
     \dot{\phi}=\lambda+\widetilde{h}(\lambda, t)+F_{2}^{*}(\phi, \lambda, t),
  \end{array}
\right.
\end{equation}
where
\begin{eqnarray}
&& F_{1}^{*}\in P_{5,0}\left(\frac{-1}{n}\right),\;\;F_{1}^{*}(-\phi, \lambda, -t)=-F_{1}^{*}(\phi, \lambda, t),\label{35}\\
&& F_{2}^{*}\in P_{5,0}\left(\frac{-1}{n}\right),\;\;F_{2}^{*}(-\phi, \lambda, -t)=F_{2}^{*}(\phi, \lambda, t),\label{36}\\
&& \widetilde{h}(\lambda, t)\in P_{5,0}\left(\frac{n-1}{n}\right),\;\; h(\lambda, -t)=-h(\lambda, t).\label{37}
\end{eqnarray}
Following Lemma 4.1 in \cite{Liu1998}, we get the Poinc\'{a}re map of \eqref{**} is of the form
$$P: \;\theta_{1}=\theta+r(\lambda)+\widetilde{f}(\theta, \lambda),\;\lambda_{1}=\lambda+\widetilde{g}(\theta, \lambda),\;\lambda\geq \lambda_{*}\gg 1,$$
where
$r(\lambda)=\lambda+\int_{0}^{1}\widetilde{h}(\lambda, t)dt,$ $$\left|\sum_{k+\ell\leq 5}D_{\lambda}^{k}\partial_{\theta}^{\ell}\widetilde{f}\;\right|\leq C \lambda^{-\frac{1}{n}},\;\;\left|\sum_{k+\ell\leq 5}D_{\lambda}^{k}\partial_{\theta}^{\ell}\widetilde{g}\;\right|\leq C \lambda^{-\frac{1}{n}}.$$
Let $\rho=r(\lambda).$ By \eqref{37} and the implicit function theorem, we have that $P$ is of the form
$$P: \theta_{1}=\theta+\rho+f^{*}(\theta, \rho),\;\;\rho_{1}=\rho+g^{*}(\theta, \rho),$$
where
$$\left|\sum_{k+\ell\leq 5}D_{\rho}^{k}\partial_{\theta}^{\ell}f^{*}\;\right|\leq \rho^{-\frac{1}{n}}<\varepsilon_{0},\;\;
\left|\sum_{k+\ell\leq 5}D_{\rho}^{k}\partial_{\theta}^{\ell}g^{*}\;\right|\leq  \rho^{-\frac{1}{n}}<\varepsilon_{0},\;\;\rho\in [m, m+1],\;\;m\gg 1.$$
Let $\Phi= \psi\circ\Psi$. It is easy to see that equation \eqref{**} is reversible with respect to $G:\; (\phi, \lambda, t)\mapsto (-\phi, \lambda, -t)$. By Lemma 2.2 in \cite{Liu1998}, we have that $P$ is reversible with respect to $G$.

Using Theorem \ref{thm1}, we get that $P$ has an invariant carve $\Gamma_{m}\subset \mathbb{T}\times [m, m+1]$ $(\forall m\gg 1).$
It follows that the original equation has a family of invariant curves which are around the infinity. Thus, we have that
$$\sup_{t\in \mathbb{R}}|x(t)|+|\dot{x}(t)|\leq C,$$
where $(x, \dot{x})$ is the solution of \eqref{6}, and $C$ depends on initial $(x(0),\dot{x}(0)).$

\section*{Acknowledgements}
The authors are grateful to Professor B. Liu for his helpful discussions.
The work was supported in part by National
Natural Science Foundation of China (11601277 (Li), 11771253 (Qi), 11790272 (Yuan) and 11771093(Yuan)).


\end{document}